\newtheorem{thm}{Theorem}[section]
\newtheorem{lem}[thm]{Lemma}
\newtheorem{cor}[thm]{Corollary}
\newtheorem{thmA}{Theorem}
\theoremstyle{definition}
\newtheorem{defn}[thm]{Definition}
\theoremstyle{remark}
\numberwithin{equation}{section}
\def \X {{\rm X}_{\pi}}
\def \Y {{\rm X}_{\pi'}}
\DeclareMathOperator{\B}{{\rm B}_{\pi}}
\newcommand{\BP}{{\rm B}_{\pi}^*}
\newcommand{\NM}{\vartriangleleft}
\newcommand{\NN}{\mathcal{N}}
\newcommand{\bN}{\mathbf{N}}
\DeclareMathOperator{\Char}{Char}
\DeclareMathOperator{\Irr}{Irr}
\DeclareMathOperator{\Lin}{Lin}
\DeclareMathOperator{\Syl}{Syl}
\DeclareMathOperator{\Hall}{Hall}
\begin{document}
\title{A new generalization of the McKay conjecture for $p$-solvable groups\\
\medskip
\small \emph{To the memory of I. M. Isaacs}}

\author{Huimin Chang}

\address{School of Mathematical and Statistics, Shanxi University, Taiyuan, 030006, China.}

\email{changhuimin@sxu.edu.cn}

\email{jinping@sxu.edu.cn}

\author{Ping Jin*}

\thanks{*Corresponding author}

\keywords{$p$-solvable group, normal $p$-series,
self-stabilizing pairs, $p$-special character, the McKay conjecture}

\date{}

\maketitle

\begin{abstract}
Let $P$ be a Sylow $p$-subgroup of a finite $p$-solvable group $G$,
where $p$ is a prime.
Using a normal $p$-series $\NN$ of $G$,
we introduce the notion of $(\NN,p)$-stable characters
and prove that $G$ and ${\bf N}_G(P)$ have equal numbers of
such characters, which gives a new generalization of the McKay conjecture for $p$-solvable groups. Also, we establish a canonical bijection between these characters
in the case where $G$ has odd order.
Our proofs depend heavily on the theory of self-stabilizing pairs
founded by M. L. Lewis, as well as some results of $\pi$-special characters
due to I. M. Isaacs.
\end{abstract}

\section{Introduction}
 Fix a prime $p$, and let $G$ be a finite group with $P\in\Syl_p(G)$.
Then the McKay conjecture, which is recently confirmed in \cite{CS},
asserts that $G$ and $\bN_G(P)$ have equal numbers of irreducible characters with
degrees not divisible by the prime $p$, i.e.,
$|\Irr_{p'}(G)|=|\Irr_{p'}(\bN_G(P))|$.
For various refinements of the McKay conjecture, the reader is referred to \cite{N2018},
and in this paper, we will present a new generalization of this conjecture for $p$-solvable groups.

We introduce a temporary notation. If $X$ is a finite group, we write
$$\Omega_i(X)=\{\chi\in\Irr(X)|\chi(1)_p=p^i\}$$
for each integer $i\ge 0$.
Then the McKay conjecture
can be restated as $|\Omega_0(G)|=|\Omega_0(\bN_G(P))|$.
In order to generalize the McKay conjecture, therefore,
it is natural to ask for each $i\ge 1$,
what kind of connection exists between $\Omega_i(G)$
and $\Omega_i(\bN_G(P))$.
By using the theory of self-stabilizing pairs founded by Lewis in
\cite{L2006,L2006a,L2008,L2009,L2010},
we are able to establish a new quantitative relationship between these two sets
in the case where $G$ is $p$-solvable.

Specifically, given a $p$-solvable group $G$,
we fix a {\bf normal $p$-series} $\NN=\{N_i\}$ of length $n$ of $G$, that is,
$$1=N_0\le N_1\le \cdots \le N_n=G,$$
where $N_i\NM G$ and $N_i/N_{i-1}$ is either a $p$-group or a $p'$-group
for $i=1,\ldots,n$.
We say that $\chi\in\Irr(G)$ is {\bf $(\NN,p)$-stable}
if $N_i$ fixes an irreducible constituent of $\chi_{N_{i-1}}$
whenever $N_i/N_{i-1}$ is a $p$-group,
and we use $\Irr(G,\NN)$ to denote the set of $(\NN,p)$-stable characters of $G$.
More generally, if $H$ is a subgroup of $G$,
then $\NN_H=\{H\cap N_i\}$ is clearly a normal $p$-series of $H$,
and thus we can define the $(\NN_H,p)$-stability of irreducible characters of $H$.
That is, $\theta\in\Irr(H)$ is $(\NN_H,p)$-stable
if $H\cap N_i$ fixes an irreducible constituent of $\theta_{H\cap N_{i-1}}$
whenever $N_i/N_{i-1}$ is a $p$-group.
For natational convenience, we will always use $\Irr(H,\mathcal{N})$ instead of $\Irr(H,\NN_H)$.
Note that if $H$ is a Sylow $p$-subgroup of $G$,
then $\Irr(H,\NN)$ consists of those characters $\alpha\in\Irr(H)$
such that $\alpha_{H\cap N}$ is homogeneous
(i.e., a multiple of an irreducible character) for each $N\in\NN$.
Furthermore, for each $i\ge 0$ we write $$\Irr_i(H,\mathcal{N})=\{\theta\in\Irr(H,\mathcal{N})|\theta(1)_p=p^i\},$$
which clearly is contained in $\Omega_i(H)$,
so that $\Irr(H,\NN)=\bigsqcup_{i\ge 0}\Irr_i(H,\NN)$.
(From here on, we use the symbol $\bigsqcup$ to denote the disjoint union of sets.)
In this way, we give a natural graded structure on the set $\Irr(H,\mathcal{N})$
by using the normal $p$-series $\NN$.

Now our main result can be stated as follows,
which generalizes the McKay conjecture for $p$-solvable groups
with an independent proof.

\begin{thmA}\label{A}
Let $G$ be a $p$-solvable group with $P\in\Syl_p(G)$, and let $\NN$ be a
normal $p$-series of $G$. Then the following hold.

{\rm (1)} $\Irr_0(G,\NN)=\Irr_{p'}(G)$ and
$\Irr_0({\bf N}_G(P),\NN)=\Irr_{p'}({\bf N}_G(P))$.

{\rm (2)} $|\Irr_i(G,\NN)|=|\Irr_i({\bf N}_G(P),\NN)|$ for all $i\ge 0$.

{\rm (3)} For $i\ge 0$, $\Irr_i(G,\NN)$ is nonempty
if and only if $p^i=\alpha(1)$ for some  $\alpha\in\Irr(P,\NN)$.

{\rm (4)} $|\Irr(G,\NN)|=|\Irr({\bf N}_G(P),\NN)|$.
\end{thmA}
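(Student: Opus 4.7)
My plan is to prove part (1) by a direct Clifford-theoretic argument, and then establish parts (2)--(4) together by induction on $|G|$, using the first nontrivial term of $\NN$ as the hook for reduction. For part (1), given $\chi\in\Irr_{p'}(G)$ and an index $i$ with $N_i/N_{i-1}$ a $p$-group, pick any irreducible constituent $\theta$ of $\chi_{N_{i-1}}$. Its $N_i$-orbit has size $|N_i:I_{N_i}(\theta)|=|I_G(\theta)N_i:I_G(\theta)|$, which is a $p$-power (since $N_i/N_{i-1}$ is) and divides $|G:I_G(\theta)|$, hence divides $\chi(1)/\theta(1)$; since $p\nmid\chi(1)$, this orbit is trivial and $N_i$ fixes $\theta$. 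The reverse inclusion $\Irr_0(G,\NN)\subseteq\Irr_{p'}(G)$ is immediate from the definition of $\Irr_0$. The same argument applies inside $\bN_G(P)$ using the series $\NN_{\bN_G(P)}$, whose $p$-steps embed into the corresponding $p$-steps of $\NN$ and therefore remain $p$-groups.

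For parts (2) and (3), I would induct on $|G|$ by stripping off the first nontrivial term $N_1$ of $\NN$. When $N_1$ is a $p'$-group, I would combine standard Clifford reduction over $N_1$ with the Glauberman correspondence matching $P$-invariant $\theta\in\Irr(N_1)$ with $\theta^{*}\in\Irr(\mathbf{C}_{N_1}(P))$. The key verification is that the $(\NN,p)$-stability condition descends along this correspondence to the inertia subgroup $I_G(\theta)/N_1$ equipped with the quotient series, whereupon induction on this smaller section finishes the case. When $N_1$ is a $p$-group (hence $N_1\le P$), the stability condition at the first $p$-step is vacuous, and I would invoke Isaacs's $\pi$-special factorization $\chi=\alpha\beta$ (with $\alpha$ $p$-special and $\beta$ $p'$-special) together with Lewis's self-stabilizing pair theory to produce canonical character bijections at each $p$-step that preserve both the $p$-part of the degree and $(\NN,p)$-stability. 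Part (3) is then obtained by iterating these $p$-step correspondences down the series to produce a degree-preserving map from $\Irr_i(G,\NN)$ to the set of $\alpha\in\Irr(P,\NN)$ with $\alpha(1)=p^i$, and part (4) follows from (2) by summing over $i\ge 0$.

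The main obstacle will be the $p$-group case of the inductive reduction for (2): propagating the chain of $N_i$-fixed constituents through a Clifford-type bijection $\Irr_i(G,\NN)\leftrightarrow\Irr_i(\bN_G(P),\NN)$ while preserving both the degree stratification and the stability conditions at every later $p$-step of $\NN$. This is precisely the setting in which Lewis's self-stabilizing pair machinery, combined with Isaacs's canonical $p$-special extensions in $p$-solvable groups, must be carefully deployed; making the bookkeeping work coherently across all $p$-steps of $\NN$ will be the technical heart of the proof.
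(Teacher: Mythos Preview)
Your argument for part (1) is correct and essentially what the paper does in one line.

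For parts (2)--(4), your inductive strategy of peeling off $N_1$ is a genuinely different route from the paper's, and it contains a real gap. In the case where $N_1$ is a $p$-group you propose to ``invoke Isaacs's $\pi$-special factorization $\chi=\alpha\beta$'', but this factorization is only available for $p$-factorable characters, and characters in $\Irr(G,\NN)$ need not be $p$-factorable. In fact the paper shows (Theorem~\ref{C}) that every $\chi\in\Irr(G,\NN)$ has the form $(\gamma\delta)^G$ with $\gamma$ $p$-special and $\delta$ $p'$-special on a subgroup $W\ge P$, but $W$ is in general a proper subgroup of $G$ (of $p'$-index), so $\chi$ itself does not factor. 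Your subsequent appeal to ``Lewis's self-stabilizing pair machinery'' is not a substitute for an actual reduction here, and you yourself flag this as ``the technical heart of the proof'' without supplying it. The $p'$-case via Glauberman is more plausible, but even there you do not explain how to reassemble the pieces over the various $P$-invariant $\theta\in\Irr(N_1)$ into a statement about $\bN_G(P)$ rather than about the inertia groups $I_G(\theta)/N_1$.

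The paper proceeds quite differently. It does not induct on $|G|$ in the proof of Theorem~\ref{A} at all; instead it first proves a structural result (Theorem~\ref{C}): there is a decomposition
\[
\Irr_i(G,\NN)=\bigsqcup_{\alpha\in\Delta_i}\Irr_\alpha(G),
\qquad
\Irr_i(\bN_G(P),\NN)=\bigsqcup_{\alpha\in\Delta_i}\Irr(\bN_G(P)\mid\alpha),
\]
where $\Delta_i$ is a set of representatives for the $\bN_G(P)$-orbits on $\Irr_i(P,\NN)$, and $\Irr_\alpha(G)$ is in bijection with the $p'$-special characters of the maximal extension subgroup $W$ of $\alpha$. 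The equality of piece sizes $|\Irr_\alpha(G)|=|\Irr(\bN_G(P)\mid\alpha)|$ is then reduced to $|{\rm X}_{p'}(W)|=|\Irr(\bN_W(P)/P)|$, which is Wolf's theorem (Lemma~\ref{Wolf}). This external counting result is the decisive ingredient you are missing; no amount of Clifford/Glauberman bookkeeping along the series replaces it. Part (3) then falls out immediately from the decomposition, since $\Irr_\alpha(G)$ is nonempty for every $\alpha\in\Irr(P,\NN)$.
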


We mention that there exist many $p$-solvable groups $G$
such that the set $\Irr_i(G,\NN)$ is nonempty for some integer $i\ge 1$.
Consider a $p$-nilpotent group $G$, for example,
and let $N$ be the normal $p$-complement of $G$,
so that $\NN=\{1, N, G\}$ is a normal $p$-series of $G$.
If $\theta\in\Irr(N)$ is invariant in $G$,
then by definition, each character $\chi\in\Irr(G)$ lying over $\theta$
lies in $\Irr(G,\NN)$.
Since $\chi(1)/\theta(1)$ divides $|G:N|$, which is a $p$-power,
it follows that $\chi\in\Irr_i(G,\NN)$, where $p^i=\chi(1)/\theta(1)$.
In this situation, we can choose the group $G$
such that $\chi_N\neq \theta$, and this forces $i\ge 1$.

In the situation of Theorem \ref{A}, for each $i\ge 0$
there is no canonical bijection from $\Irr_i(G,\NN)$ onto $\Irr_i({\bf N}_G(P),\NN)$
in general. 
If $G$ has odd order, however, a canonical bijection does exist
between $\Irr_{p'}(G)$ and $\Irr_{p'}({\bf N}_G(P))$
(see Theorem 10.9 of \cite{I1973}).
We can establish the same result for any positive integer $i$.

\begin{thmA}\label{B}
Let $G$ be a solvable group of odd order,
and let $P\in\Syl_p(G)$.
Suppose that $\NN$ is a normal $p$-series of $G$.
Then there is a canonical bijection from $\Irr(G,\NN)$ onto $\Irr({\bf N}_G(P),\NN)$.
Also, if $\xi\in\Irr({\bf N}_G(P),\NN)$ is the image of
$\chi\in\Irr(G,\NN)$ under this map, then
$\xi$ is a constituent of $\chi_{{\bf N}_G(P)}$
and $\xi(1)$ divides $\chi(1)$.

Furthermore, for each $i\ge 0$, the bijection
maps $\Irr_i(G,\NN)$ onto $\Irr_i({\bf N}_G(P),\NN)$,
and thus defines a canonical bijection between these two sets.
\end{thmA}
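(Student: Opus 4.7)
The plan is to prove Theorem~\ref{B} by induction on $|G|$, promoting the numerical equality supplied by Theorem~\ref{A} to a fully canonical construction. The odd-order hypothesis is essential here because under it both the Glauberman correspondence and Isaacs' canonical extensions of $\pi$-special characters are uniquely determined, with no sign or root-of-unity ambiguities, and it is exactly this uniqueness that produces a canonical (rather than merely existing) bijection.

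For the inductive step I would let $N=N_1$ be the smallest nontrivial term of $\NN$ and split on its type. If $N$ is a $p'$-group, then for $\chi\in\Irr(G,\NN)$ I pick an irreducible constituent $\theta$ of $\chi_N$, whose $G$-orbit is an invariant of $\chi$. Setting $T=G_\theta$ and, after a $G$-conjugation if needed, arranging that $P\cap T\in\Syl_p(T)$, the Clifford correspondence identifies $\chi$ with $\tilde\chi\in\Irr(T\mid\theta)$; the inductive hypothesis applied to $T$ with the induced series $\NN_T$ yields a canonical $\tilde\xi\in\Irr(\bN_T(P\cap T),\NN)$, and an analogous Clifford construction on the normalizer side, mediated by the canonical Glauberman correspondence for the coprime action of $P$ on $N$, transports $\tilde\xi$ to the desired $\xi\in\Irr(\bN_G(P),\NN)$. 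If instead $N$ is a $p$-group, then the factorization $\chi=\alpha\cdot\beta$ into its $p$-special and $p'$-special parts (unique in odd order) should be shown to preserve $(\NN,p)$-stability, and I would handle $\alpha$ and $\beta$ separately using the canonical extensions of $\pi$-special characters from normal subgroups to their stabilizers, together with Gallagher's theorem. The case $T=G$ in the first branch (i.e., $\theta$ is $G$-invariant) is handled by the same canonical extension machinery.

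The main obstacle will be verifying that the $(\NN,p)$-stability condition is preserved at every intermediate step of the induction and that the resulting $\xi$ is always a constituent of $\chi_{\bN_G(P)}$ with $\xi(1)\mid\chi(1)$ and $\xi(1)_p=\chi(1)_p$ (the latter being what delivers the graded statement $\Irr_i(G,\NN)\to\Irr_i(\bN_G(P),\NN)$). This is precisely the bookkeeping that Lewis's theory of self-stabilizing pairs is designed for, since self-stabilizing pairs package the normalizer data that must be transported across each Clifford reduction. Independence of the construction from the choice of representative $\theta$ in its $G$-orbit should then follow automatically from the uniqueness statements of the canonical correspondences, which are available precisely because $|G|$ is odd; so I expect canonicality to be less a new difficulty than a consequence of assembling the reductions in the correct order.
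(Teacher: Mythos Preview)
Your inductive strategy diverges substantially from the paper's argument, and the $p$-group branch contains a genuine gap. You claim that when $N=N_1$ is a $p$-group one may factor $\chi=\alpha\cdot\beta$ as a product of a $p$-special and a $p'$-special character of $G$. But such a factorization need not exist: even in odd-order solvable groups the $p$-factorable characters form a proper subset of $\Irr(G)$, and membership in $\Irr(G,\NN)$ does not force factorability (indeed, $(\NN,p)$-stability imposes no constraint at the bottom layer when $N_1$ is a $p$-group, since the relevant condition there is vacuous). So the decomposition you propose to ``handle separately'' is simply unavailable, and this branch of the induction does not go through as written. The $p'$-branch is more plausible in outline, but the passage from $\bN_T(P\cap T)$ back to $\bN_G(P)$ via a Glauberman step is left entirely unspecified; making it canonical and compatible with the graded structure would require substantial additional work.

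The paper avoids all of this. Rather than inducting down the series, it uses the structure theorem already established (Theorem~\ref{C}): every $\chi\in\Irr(G,\NN)$ has the form $(\gamma\delta)^G$, where $(W,\gamma)$ is the self-stabilizing pair attached to some $\alpha\in\Irr(P,\NN)$ and $\delta$ is a $p'$-special character of $W$. The known canonical odd-order correspondence (Lemma~\ref{p'-cor}) is then applied \emph{only to $\delta$, inside $W$}, yielding a $p'$-special $\delta^*$ of $\bN_W(P)$; the image of $\chi$ is defined to be $(\gamma'\delta^*)^{\bN_G(P)}$ with $\gamma'=\gamma_{\bN_W(P)}$. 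Well-definedness follows directly from parts (4)--(5) of Theorem~\ref{C}, the constituent and degree-divisibility claims come from Lemma~\ref{p'-cor} together with Turull's divisibility result on Hall-subgroup counts (Lemma~\ref{T}), and the equality $\chi(1)_p=\gamma(1)=\tilde\chi(1)_p$ gives the graded statement at once. No induction on $|G|$, no Glauberman correspondence, and no global factorization of $\chi$ on $G$ is needed---the only factorization takes place on $W$, where it holds by construction.
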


We continue to assume that $G$ is a $p$-solvable group,
$p$ is a prime and $P\in\Syl_p(G)$.
In order to prove our Theorems \ref{A} and \ref{B},
we need to extend the definition of the map $\Psi:\Lin(P)\to\Irr(G)$
defined by Isaacs and Navarro in \cite{IN2001},
where $\Lin(P)$ denotes the set of linear characters of $P$.
We now consider an arbitrary character $\alpha\in\Irr(P)$.
By Corollary 3.15 and Theorem 3.16 of \cite{I2018},
we know that $\alpha$ uniquely determines a pair $(W,\gamma)$,
where $W$ is the unique largest subgroup $G$ to which $\alpha$ extends
and $\gamma$ is the unique $p$-special extension of $\alpha$ to $W$.
(For the definition and properties of $p$-special
(or $p'$-special) characters, see the next section.)
In this situation, we can define a new map, which we still call $\Psi$,
$$\Psi:\Irr(P)\to\Char(G),\;\alpha\mapsto \gamma^G,$$
from the set $\Irr(P)$ of irreducible characters of $P$
into the set $\Char(G)$ of all characters of $G$ by setting $\Psi(\alpha)=\gamma^G$.
For convenience, we say that $\Psi$ is the {\bf standard map} associated with $P$.

The following result, which may be of independent interest,
lies at the heart of the proof of Theorems \ref{A} and \ref{B}.

\begin{thmA}\label{C}
Let $P$ be a Sylow $p$-subgroup of a $p$-solvable group $G$, where $p$ is a prime,
and suppose that $\NN$ is a normal $p$-series of $G$.
Let $\Psi:\Irr(P)\to\Char(G)$ be the standard map defined as above.
The following then hold.

{\rm (1)} If $\alpha\in\Irr(P,\NN)$, then $\Psi(\alpha)\in\Irr(G)$.

{\rm (2)} If $\alpha,\beta\in\Irr(P,\NN)$,
then $\Psi(\alpha)=\Psi(\beta)$ if and only if $\alpha$ and $\beta$ are ${\bf N}_G(P)$-conjugate.

{\rm (3)} For each $\alpha\in\Irr(P,\NN)$,
let $W$ be the unique largest subgroup of $G$
to which $\alpha$ extends and let $\gamma$ be the unique $p$-special
extension of $\alpha$ to $W$. Then the map $\delta\mapsto(\gamma\delta)^G$
is an injection from the set of $p'$-special characters of $W$
into $\Irr(G)$.
Write $\Irr_\alpha(G)$ for the image of this injection, i.e.,
$$\Irr_\alpha(G)=\{(\gamma\delta)^G\,|\,
\delta \;\text{is a $p'$-special character of}\; W\},$$
so that $\Irr_\alpha(G)\subseteq\Irr(G)$.
Also, if $\chi\in\Irr_\alpha(G)$, then $\chi(1)_p=\alpha(1)$.

{\rm (4)} If $\alpha,\beta\in\Irr(P,\NN)$,
then $\Irr_\alpha(G)=\Irr_\beta(G)$ if and only if
$\alpha$ and $\beta$ are ${\bf N}_G(P)$-conjugate,
and otherwise, $\Irr_\alpha(G)$ and $\Irr_\beta(G)$ are disjoint.

{\rm (5)}  If $\alpha\in\Irr(P,\NN)$,
then $|\Irr_\alpha(G)|=|\Irr({\bf N}_G(P)|\alpha)|$.

{\rm (6)} $\Irr(G,\NN)=\bigcup\Irr_\alpha(G)$,
where $\alpha$ runs over $\Irr(P,\NN)$.

{\rm (7)} $\Irr({\bf N}_G(P),\NN)=\bigcup\Irr({\bf N}_G(P)|\alpha)$,
where $\alpha$ runs over $\Irr(P,\NN)$.
\end{thmA}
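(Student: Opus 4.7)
My plan is to prove Theorem C by induction on $|G|$ and on the length of $\NN$, organizing the seven parts so that (1), (3), and (6) establish how $(\NN,p)$-stable characters of $G$ arise from characters of $P$ via $\Psi$, while (2), (4), (5) extract the injectivity and counting, and (7) is a separate, direct Clifford-theoretic fact about $\bN_G(P)$.

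The cornerstone is part (1). Given $\alpha\in\Irr(P,\NN)$ with associated pair $(W,\gamma)$, the $(\NN,p)$-stability hypothesis is designed to match precisely the hypothesis of Lewis's self-stabilizing pair framework: at each $p$-layer $N_i/N_{i-1}$ of the series, the requirement that $N_i$ fix an irreducible constituent of the relevant restriction should propagate inductively to show that $W$ is exactly the inertia group of $\gamma$ in $G$, so that $\gamma^G$ is irreducible by Clifford's theorem. Since $P\le W$, the index $[G:W]$ is a $p'$-number, hence $\gamma^G(1)_p=\gamma(1)_p=\alpha(1)$. Part (3) then follows from Isaacs's product theorem for $\pi$-special characters: $\gamma\delta\in\Irr(W)$ with inertia group in $G$ still equal to $W$ (any $G$-conjugate of $\gamma\delta$ landing in $\Irr(W)$ has the same $p$-special and $p'$-special factors by uniqueness of the factorization, and $\gamma$ is fixed by exactly $W$), so $(\gamma\delta)^G\in\Irr(G)$, the map $\delta\mapsto(\gamma\delta)^G$ is injective, and the degree formula $\chi(1)_p=\alpha(1)$ is immediate.

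Parts (2) and (4) reduce to asking when two pairs $(W,\gamma)$ and $(W',\gamma')$ produce $G$-conjugate induced characters. By uniqueness of the $p$-special extension this is equivalent to $G$-conjugacy of $(W,\gamma)$, and a Frattini-type argument applied to the Sylow subgroup $P$ inside $W$ reduces $G$-conjugacy to $\bN_G(P)$-conjugacy of $\alpha$ and $\beta$. For part (5), I would compare the $p'$-special characters of $W$ lying over $\alpha$ with $\Irr(\bN_G(P)|\alpha)$ using the Glauberman--Isaacs $\pi$-correspondence applied to the action of $P$ on the $p'$-sections of $W$; this is where solvability is essential to match the two counts canonically.

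Parts (6) and (7) then assemble everything. For (7), if $\xi\in\Irr(\bN_G(P),\NN)$ and $\alpha$ is an irreducible constituent of $\xi_P$, the stability condition forces $\alpha\in\Irr(P,\NN)$, so $\xi\in\Irr(\bN_G(P)|\alpha)$; the reverse inclusion follows from how $P$ is Sylow in $\bN_G(P)$. For (6), any $\chi\in\Irr(G,\NN)$ should arise as $(\gamma\delta)^G$ for suitable data by tracing through the series using parts (1) and (3). The main obstacle I anticipate is precisely the verification in part (1) that $(\NN,p)$-stability at each $p$-layer feeds directly into Lewis's self-stabilizing framework, and the corresponding counting in (5) through the Glauberman--Isaacs correspondence---these are the two points where the combination of solvability, the $p$-series, and the stability condition must be used in a coordinated way, and they are what the rest of the paper hinges on.
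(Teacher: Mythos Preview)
Your overall architecture matches the paper's: Lewis's self-stabilizing pairs for (1) and (3), a Frattini-type conjugacy argument for (2), and a counting argument for (5). However, there is a genuine error in your reasoning for (1) and (3). You write that ``$W$ is exactly the inertia group of $\gamma$ in $G$, so that $\gamma^G$ is irreducible by Clifford's theorem.'' This inference is false: the Clifford correspondence applies when inducing from a \emph{normal} subgroup, and $W$ is not normal in $G$. The mere equality $\bN_G(W,\gamma)=W$ does not force $\gamma^G\in\Irr(G)$; for instance, if $G=S_3$, $W$ is a Sylow $2$-subgroup, and $\gamma$ is its nontrivial linear character, then $\bN_G(W,\gamma)=W$ but $\gamma^G$ has degree $3$ and is reducible. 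What is actually needed is the self-stabilizing \emph{chain}: the paper's key lemma proves that $(W,\gamma)$ is a self-stabilizing pair for $\NN$, meaning each $\gamma_{W\cap N_i}$ is homogeneous with unique constituent $\gamma_i$, and $W\cap N_{i+1}=\bN_{N_{i+1}}(W\cap N_i,\gamma_i)$. Irreducibility of $\gamma^G$ then follows by iterating the Clifford correspondence up the series (Lewis's result, Lemma~\ref{IS}), and $(\gamma\delta)^G\in\Irr(G)$ comes from Lewis's injection for self-stabilizing pairs (Lemma~\ref{sat}), not from an inertia-group argument applied to $\gamma\delta$.

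Your sketches for (4) and (6) are also too thin to carry the argument. For (4), deducing $\bN_G(P)$-conjugacy of $\alpha,\beta$ from $\Irr_\alpha(G)\cap\Irr_\beta(G)\ne\varnothing$ is not a corollary of (2): the pairs $(W,\gamma\delta)$ and $(V,\nu\mu)$ are \emph{not} self-stabilizing for $\NN$ in general, because the $p'$-special factor need not restrict homogeneously along the series, so one cannot invoke conjugacy of self-stabilizing pairs for $\chi$ directly. The paper handles this by induction on the length of $\NN$, treating the cases where $G/N_{n-1}$ is a $p$-group or a $p'$-group separately and tracking both the $p$-special and $p'$-special factors down to $N_{n-1}$. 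Part (6) likewise requires its own substantial inductive argument (the paper's Theorem~\ref{A'}) to show that every $(\NN,p)$-stable $\chi$ arises as $(\gamma\delta)^G$; ``tracing through the series using (1) and (3)'' does not by itself produce the required $\alpha\in\Irr(P,\NN)$. Finally, for (5) the paper does not use the Glauberman--Isaacs correspondence but Wolf's theorem (Lemma~\ref{Wolf}) that the number of $p'$-special characters of $W$ equals $|\Irr(\bN_W(P)/P)|$, combined with Gallagher to identify the latter with $|\Irr(\bN_G(P)\mid\alpha)|$.
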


Throughout the paper, all groups considered are finite,
and the notation and terminology are mostly taken from
\cite{I1976} and \cite{I2018}.
In Section 2, we will briefly review some properties of $\pi$-special characters
and the theory of self-stabilizing pairs needed for our proofs,
and in Section 3, we will prove our theorems mentioned above.

\section{Preliminaries}
We begin by introducing some notation.
Suppose that $H$ is a subgroup of a group $G$,
and let $\theta\in\Irr(H)$ be an irreducible character of $H$.
In this situation, we say that $(H,\theta)$ is a {\bf character pair} of $G$.
Observe that $G$ acts by conjugation on the set of character pairs of it,
where by definition $(H,\theta)^g=(H^g,\theta^g)$
and $\theta^g\in\Irr(H^g)$ is defined by the formula $\theta^g(x^g)=\theta(x)$
for $x\in H$. We use ${\bf N}_G(H,\theta)$ to denote the stabilizer of $(H,\theta)$ in $G$.
Following \cite{I2018}, we define a partial order on the set of character pairs of $G$
by setting $(H,\theta)\le (K,\psi)$ if $H\le K$ and $\theta$ lies under $\psi$.

For the reader's convenience, we briefly review the theory of $\pi$-special characters
from \cite{I2018}. Let $G$ be a $\pi$-separable group,
where $\pi$ is a set of primes, and let $\chi\in\Irr(G)$.
We say that $\chi$ is {\bf $\pi$-special} if $\chi(1)$ is a $\pi$-number and
the determinantal order $o(\theta)$ is a $\pi$-number for every irreducible constituent $\theta$ of the restriction $\chi_S$ for every subnormal subgroup $S$ of $G$.
The set of $\pi$-special characters of $G$ is denoted $\X(G)$.

We need some basic properties of $\pi$-special characters.

\begin{lem}\label{prod}
Let $G$ be $\pi$-separable,
and suppose that $\alpha,\beta\in\Irr(G)$ are $\pi$-special and $\pi'$-special, respectively.
Then $\alpha\beta$ is irreducible.
Also, if $\alpha\beta=\alpha'\beta'$, where $\alpha'$ is $\pi$-special and $\beta'$ is $\pi'$-special, then $\alpha=\alpha'$ and $\beta=\beta'$.
\end{lem}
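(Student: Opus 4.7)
The plan is to proceed by induction on $|G|$ for both assertions, reducing via Clifford theory applied to a minimal normal subgroup of $G$. The foundational observation I need is this: if $N \NM G$ is a $\pi$-subgroup and $\beta \in \Y(G)$, then every irreducible constituent of $\beta_N$ is $\pi'$-special on $N$ (by the subnormal clause in the definition of $\pi'$-special); but on a $\pi$-group, the only character with both $\pi'$-degree and $\pi'$-determinantal order is trivial, so $N \le \Ker \beta$. Symmetrically, $\pi$-special characters are trivial on normal $\pi'$-subgroups.

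For irreducibility of $\alpha\beta$: the base cases where $G$ is a $\pi$-group or a $\pi'$-group are immediate, since one of the two factors is then forced to be trivial. Otherwise, pick a minimal normal subgroup $N$ of $G$; by $\pi$-separability it is either a $\pi$-group or a $\pi'$-group, and by symmetry I assume $N$ is a $\pi$-group, so $N \le \Ker\beta$. Let $\mu$ be an irreducible constituent of $\alpha_N$ (necessarily $\pi$-special on $N$) and set $T = I_G(\mu)$. If $T < G$, use Clifford's correspondence to write $\alpha = \alpha_0^G$ with $\alpha_0 \in \Irr(T \mid \mu)$, check that $\alpha_0 \in \X(T)$, and apply induction on $T$ to conclude $\alpha_0 \cdot \beta_T \in \Irr(T)$; the identity $(\alpha_0 \cdot \beta_T)^G = \alpha_0^G \cdot \beta = \alpha\beta$ together with Clifford's correspondence preserving irreducibility under induction then gives $\alpha\beta \in \Irr(G)$. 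If $T = G$, I invoke the extension theorem for $\pi$-special characters to obtain a $\pi$-special extension $\hat\mu \in \X(G)$ of $\mu$; Gallagher's theorem then yields $\alpha = \hat\mu \cdot \tau$ for a unique $\tau \in \Irr(G/N)$, and one verifies $\tau \in \X(G/N)$. Induction on $G/N$ gives $\tau\beta \in \Irr(G/N)$, and $\alpha\beta = \hat\mu \cdot (\tau\beta)$ is irreducible by a second application of Gallagher.

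For uniqueness, suppose $\alpha\beta = \alpha'\beta'$ with $\alpha,\alpha' \in \X(G)$ and $\beta,\beta' \in \Y(G)$. Evaluating at $1$ gives $\alpha(1)\beta(1) = \alpha'(1)\beta'(1)$; since $\alpha(1)/\alpha'(1)$ is a ratio of $\pi$-numbers and $\beta'(1)/\beta(1)$ is a ratio of $\pi'$-numbers, both quotients equal $1$, forcing $\alpha(1) = \alpha'(1)$ and $\beta(1) = \beta'(1)$. Taking a minimal normal $\pi$-subgroup $N$ (the $\pi'$-case being symmetric), the equality $\beta_N = \beta(1) \cdot 1_N = \beta'(1) \cdot 1_N = \beta'_N$ combined with $\alpha\beta = \alpha'\beta'$ restricted to $N$ yields $\alpha_N = \alpha'_N$. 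Hence $\alpha$ and $\alpha'$ share a common constituent $\mu$ over $N$, and Clifford's correspondence (plus induction on $I_G(\mu)$ if proper, or on $G/N$ via Gallagher and $\hat\mu$ if $I_G(\mu) = G$) identifies $\alpha$ with $\alpha'$, after which $\beta = \beta'$ is automatic.

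The hard step will be the $T = G$ case of the irreducibility argument, where no proper Clifford reduction is available; it rests on the nontrivial extension property that $G$-invariant $\pi$-special characters of normal subgroups of a $\pi$-separable group extend to $\pi$-special characters of the whole group. This extendibility, which is the cornerstone of the Gajendragadkar--Isaacs theory, together with Gallagher's theorem, is what makes the clean reduction to a proper quotient possible.
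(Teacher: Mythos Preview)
The paper does not give a proof of this lemma; it simply cites Theorem~2.2 of \cite{I2018}. Your attempt at a self-contained argument has the right inductive shape but contains two genuine gaps.

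In the $T<G$ case you apply the inductive hypothesis to $\alpha_0 \cdot \beta_T$ on $T$, which requires $\beta_T$ to be an irreducible $\pi'$-special character of $T$. Neither property is automatic: knowing $N\le\Ker\beta$ only says that $\beta$ factors through $G/N$, but $T/N$ is a subgroup of $G/N$ determined by the inertial behaviour of $\mu$, which is entirely unrelated to $\beta$, and restriction to an arbitrary subgroup preserves neither irreducibility nor $\pi'$-specialty. The claim $\alpha_0\in\X(T)$ is likewise not free, since Clifford correspondents of $\pi$-special characters are not $\pi$-special in general.

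More seriously, the extension property you invoke in the $T=G$ case is false as stated. It is \emph{not} true that every $G$-invariant $\pi$-special character of a normal subgroup of a $\pi$-separable group extends to $G$. Take $G=\SL_2(3)$, $\pi=\{2\}$, and $N=Z(G)$: the nontrivial character $\mu$ of $N$ is $G$-invariant and $\pi$-special, yet every member of $\Irr(G|\mu)$ has degree $2$, so $\mu$ does not extend. The genuine extension result in this theory (Lemma~\ref{normal}(3)) requires $G/N$ to be a $\pi'$-group, which a minimal normal $\pi$-subgroup does not guarantee. One standard remedy is to run the induction through a \emph{maximal} normal subgroup $M$ instead, so that $G/M$ is a $\pi$- or $\pi'$-group and Lemma~\ref{res} forces one of $\alpha_M$, $\beta_M$ to remain irreducible; this is closer to how \cite{I2018} organizes the argument.
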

\begin{proof}
This is Theorem 2.2 of \cite{I2018}.
\end{proof}

\begin{lem}\label{res}
Let $G$ be $\pi$-separable, and let $H\le G$ have $\pi'$-index.
Then restriction $\chi\mapsto\chi_H$ defines an injection from $\X(G)$ into $\X(H)$.
\end{lem}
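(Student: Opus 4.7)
The plan is to verify simultaneously, by induction on $|G|$, that $\chi_H\in\Irr(H)$, that $\chi_H$ is $\pi$-special of $H$, and that restriction is injective on $\X(G)$. The base case $G=H$ is trivial, so assume $H<G$ and pick a minimal normal subgroup $M$ of $G$; by $\pi$-separability, $M$ is either a $\pi$-group or a $\pi'$-group.

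If $M$ is a $\pi'$-group, I would first note that every irreducible constituent of $\chi_M$ is a $\pi$-special character of $M$, since by Lemma~\ref{prod} (or the general fact that $\pi$-speciality passes to constituents of restrictions to subnormal subgroups) the $\pi$-special property is hereditary in this sense. Such a constituent has both degree and determinantal order equal to $\pi$-numbers dividing $|M|$, hence equal to $1$, and so it is the principal character. Therefore $M\le\Ker\chi$, and $\chi$ descends to $\overline{\chi}\in\X(G/M)$. Since $HM/M$ still has $\pi'$-index in $G/M$, the inductive hypothesis applied to $G/M$ delivers the three conclusions for $\chi_{HM}$, and a second invocation of the inductive hypothesis inside the strictly smaller group $HM$ (where $[HM:H]=[M:M\cap H]$ is a $\pi'$-number) then descends from $\chi_{HM}$ to $\chi_H$.

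If $M$ is a $\pi$-group, I would first observe that, since $[G:H]$ is a $\pi'$-number, $H$ contains a Hall $\pi$-subgroup of $G$, and so $M\le H$. Next I would invoke Clifford theory: let $\theta$ be an irreducible constituent of $\chi_M$, let $T=G_\theta$ be its inertia group, and let $\psi\in\Irr(T\,|\,\theta)$ be the Clifford correspondent of $\chi$. A parallel argument (again using that $\pi$-speciality is preserved under the Clifford correspondence) shows that $\psi$ is $\pi$-special of $T$. If $T<G$, then induction applied to $T$ with the subgroup $T\cap H$, whose index $[T:T\cap H]=[TH:H]$ divides $[G:H]$ and is therefore a $\pi'$-number, gives that $\psi_{T\cap H}$ is irreducible and $\pi$-special of $T\cap H$. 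The Clifford correspondence in $H$ over the $H$-invariant character $\theta$ of $M$ then identifies $\psi_{T\cap H}$ with $\chi_H$, yielding irreducibility, $\pi$-specialness, and injectivity of restriction in one stroke.

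The main obstacle is the fully stable subcase $T=G$ of the $\pi$-case, where Clifford theory gives no reduction and one must argue about $\chi_H$ directly. I would handle this using the canonical $\pi$-special extension theory of \cite{I2018}: the uniqueness of $\pi$-special extensions of an invariant $\pi$-special character of $M$ to $G$ and to $H$, together with compatibility under restriction, forces $\chi_H$ to be the unique $\pi$-special extension of $\theta$ to $H$ lying under $\chi$, which simultaneously supplies irreducibility, $\pi$-specialness, and injectivity. Since this lemma is in fact a standard foundational statement in the $\pi$-special calculus of \cite{I2018}, in practice I would cite the corresponding result there rather than reprove it from scratch.
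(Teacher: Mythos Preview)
The paper does exactly what your last sentence proposes: it cites Theorem~2.10 of \cite{I2018} and says nothing more. In that sense your proposal and the paper agree.

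That said, the inductive sketch you offer before deferring to the citation has two real gaps. In the $\pi'$-case you claim $HM$ is ``strictly smaller'' than $G$, but $HM=G$ is entirely possible (nothing forces $M\le H$ when $M$ is a $\pi'$-group), and then your second inductive invocation is unavailable; one must instead argue directly through the isomorphism $H/(H\cap M)\cong G/M$. More seriously, in the fully stable subcase $T=G$ with $M$ a $\pi$-group, your appeal to ``uniqueness of $\pi$-special extensions'' is misplaced: over a normal $\pi$-subgroup there is no such uniqueness---indeed by Lemma~\ref{normal}(2) \emph{every} irreducible character of $G$ lying over $\theta$ is $\pi$-special---and $\chi$ need not be an extension of $\theta$ at all, only a multiple of it on $M$. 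The actual treatment of this case in \cite{I2018} proceeds by a different route. Since both you and the paper ultimately rest on the citation, these issues are moot here, but the sketch as written would not stand on its own.
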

\begin{proof}
See Theorem 2.10 of \cite{I2018}.
\end{proof}

\begin{lem}\label{normal}
Let $N\NM G$, where $G$ is a $\pi$-separable group,
and suppose that $\chi\in\Irr(G)$ and $\theta\in\Irr(N)$.
Then the following hold.

{\rm (1)} If $\chi$ is $\pi$-special, then every irreducible constituent of $\chi_N$ is also $\pi$-special.

{\rm (2)} If $G/N$ is a $\pi$-group, then every member of $\Irr(G|\theta)$
is $\pi$-special.

{\rm (3)} If $G/N$ is a $\pi'$-group and $\theta$ is invariant in $G$,
then $\theta$ has a unique $\pi$-special extension to $G$.
\end{lem}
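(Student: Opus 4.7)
My plan is to treat the three parts in sequence, deriving each from the definition of $\pi$-special together with Clifford's theorem and the factorization in Lemma \ref{prod}. Parts (2) and (3) tacitly require $\theta$ to be $\pi$-special (otherwise (2) fails---for instance take $G=N=A_3$, $\theta$ a nontrivial linear character, and $\pi=\{2\}$---and (3) is vacuous since a $\pi$-special extension can exist only when $\theta$ is $\pi$-special by part (1)), so I will work under this interpretation. Part (1) is essentially an unpacking of the definition. Let $\theta$ be any irreducible constituent of $\chi_N$; Clifford's theorem yields $\theta(1)\mid\chi(1)$, so $\theta(1)$ is a $\pi$-number. For the subnormal condition, any subnormal subgroup $S$ of $N$ is subnormal in $G$, and every irreducible constituent of $\theta_S$ also appears in $\chi_S$ by transitivity of restriction, so its determinantal order is a $\pi$-number by the hypothesis on $\chi$.

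For part (2), given $\chi\in\Irr(G|\theta)$ with $\theta$ assumed $\pi$-special, Clifford's theorem writes $\chi(1)=e\theta(1)[G:I_G(\theta)]$ with $e\mid[I_G(\theta):N]$; all three factors are $\pi$-numbers since $G/N$ is, so $\chi(1)$ is a $\pi$-number. For the subnormal determinantal-order condition I would induct on $|G:N|$, using the Clifford correspondence to reduce to the inertial case $I_G(\theta)=G$ and then invoking part (3) together with Gallagher: every extension of $\theta$ to $G$ differs from a chosen $\pi$-special extension by a linear character $\lambda\in\Irr(G/N)$, and because $G/N$ is a $\pi$-group every such $\lambda$ contributes only $\pi$-power determinantal orders on subnormal subgroups.

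For part (3), uniqueness is the easier half. Suppose $\chi,\chi'$ are both $\pi$-special extensions of $\theta$; Gallagher gives $\chi'=\chi\lambda$ for a unique linear $\lambda\in\Irr(G/N)$, which has $\pi'$-power order. Since $\det(\chi')=\det(\chi)\,\lambda^{\chi(1)}$ and both determinants have $\pi$-power order, $\lambda^{\chi(1)}$ has order that is simultaneously a $\pi$-number and a $\pi'$-number, hence trivial; coprimality of $\chi(1)$ and $o(\lambda)$ then forces $\lambda=1$. For existence I would induct on $|G:N|$, reducing to the case where $G/N$ is an elementary abelian chief factor of order prime to $\pi$; the classical coprime-degree extension theorem produces some extension $\chi_0$ of $\theta$, and among its Gallagher twists I would select the unique twist whose determinant has $\pi$-power order, then verify the subnormal determinantal condition via induction and Lemma \ref{prod}. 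The main technical obstacle I anticipate is precisely this last verification of the subnormal condition for the selected candidate; the remaining content is essentially mechanical once Lemma \ref{prod} and Gallagher's extension theory are in hand.
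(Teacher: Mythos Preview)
The paper does not actually prove this lemma: part (1) is dismissed as ``by definition'' and parts (2) and (3) are simply referred to Theorem~2.4 of Isaacs' \emph{Characters of Solvable Groups}. Your observation that parts (2) and (3) require the standing hypothesis that $\theta$ be $\pi$-special is correct and matches the hypothesis in Isaacs' theorem; the paper has suppressed it. Your treatment of part (1) is fine and is exactly what ``by definition'' unpacks to.

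Your sketch for part (2), however, has a real gap. You propose to reduce via the Clifford correspondence to the case $I_G(\theta)=G$ and then ``invoke part (3) together with Gallagher'', choosing a $\pi$-special extension of $\theta$ and twisting by linear characters of $G/N$. This fails on two counts. First, part (3) concerns the case where $G/N$ is a $\pi'$-group, while in part (2) it is a $\pi$-group, so part (3) is simply inapplicable. Second, even when $\theta$ is $G$-invariant and $\pi$-special it need not extend to $G$ at all (take $G$ an extraspecial $p$-group, $N=Z(G)$, $\theta$ faithful linear, $\pi=\{p\}$), so there is no extension to twist; the members of $\Irr(G|\theta)$ are then genuinely not of the form $\hat\theta\lambda$. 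The standard argument in Isaacs' book does not pass through Gallagher here: one instead works directly with a subnormal $S\le G$, uses that $S\cap N\SN N$ and $S/(S\cap N)$ is a $\pi$-group, and controls $o(\det\phi)$ for constituents $\phi$ of $\chi_S$ via the determinant formula relating $\det\phi$ to the determinants of the constituents of $\phi_{S\cap N}$.

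Your outline for part (3) is essentially the classical one and is fine in spirit; the uniqueness half is correct as written, and the existence half (produce an extension by the coprime extension theorem, then twist to arrange $\pi$-power determinantal order, then verify the subnormal condition) is the standard route, though as you anticipate the verification step needs the same determinant formula mentioned above rather than Lemma~\ref{prod}.
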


\begin{proof}
Part (1) follows by definition, and for (2) and (3), see Theorem 2.4 of \cite{I2018}.
\end{proof}

\begin{lem}\label{ext}
Let $H$ be a Hall $\pi$-subgroup of a $\pi$-separable group $G$,
and let $H\le T\le G$. Assume that $\tau\in\Irr(T)$ is $\pi$-special
and $\tau^G$ is irreducible. Write $\beta=\tau_H$.
Then $\beta\in\Irr(H)$ and $T$ is the unique largest subgroup of $G$
to which $\beta$ extends and $\tau$ is the unique $\pi$-special extension of
$\beta$ to $T$.
\end{lem}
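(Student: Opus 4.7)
The plan is to combine two ingredients: the injectivity of restriction on $\pi$-special characters (Lemma \ref{res}), and the existence/uniqueness theorem of Corollary 3.15 and Theorem 3.16 of \cite{I2018} that attaches to $\beta$ a unique largest subgroup of $G$ to which it extends, along with a unique $\pi$-special extension and the irreducibility of its induction to $G$. The irreducibility hypothesis on $\tau^G$ will then force that largest subgroup to coincide with $T$.

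First I would observe that since $H$ is a Hall $\pi$-subgroup of $G$, every index $|K:H|$ with $H\le K\le G$ is a $\pi'$-number. Applying Lemma \ref{res} to the inclusion $H\le T$ yields $\beta=\tau_H\in\X(H)$, so $\beta$ is irreducible (and $\pi$-special). The same lemma gives an injection $\X(T)\hookrightarrow\X(H)$ via restriction, which directly delivers the uniqueness part of the conclusion: any $\pi$-special $\tau'\in\Irr(T)$ with $\tau'_H=\beta$ satisfies $\tau'_H=\tau_H$, and hence $\tau'=\tau$.

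Next I would invoke Corollary 3.15 and Theorem 3.16 of \cite{I2018} to associate to $\beta\in\Irr(H)$ a unique largest subgroup $W$ of $G$ (containing $H$) to which $\beta$ extends, together with the unique $\pi$-special extension $\gamma\in\X(W)$ and the additional fact that $\gamma^G\in\Irr(G)$. Since $\tau$ extends $\beta$ to $T$, maximality of $W$ gives $T\le W$. As $|W:T|$ is still a $\pi'$-number, Lemma \ref{res} applied to $T\le W$ gives $\gamma_T\in\X(T)$; since $(\gamma_T)_H=\gamma_H=\beta=\tau_H$, the injectivity above forces $\gamma_T=\tau$.

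It remains to show $W=T$. Writing $\tau^W=\sum_i m_i\chi_i$ with distinct $\chi_i\in\Irr(W)$ and integers $m_i\ge 1$, the identity $\tau^G=(\tau^W)^G=\sum_i m_i\chi_i^G$ combined with irreducibility of $\tau^G$ forces each $\chi_i^G$ to be a positive integer multiple $k_i\tau^G$, and then $\sum_i m_ik_i=1$ leaves only a single term with $m_1=k_1=1$; hence $\tau^W$ is itself irreducible. Frobenius reciprocity now gives
\[
[\tau^W,\gamma]_W=[\tau,\gamma_T]_T=[\tau,\tau]_T=1,
\]
so $\gamma$ is a constituent of the irreducible character $\tau^W$, whence $\tau^W=\gamma$. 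Comparing degrees yields $[W:T]\,\beta(1)=\beta(1)$, so $W=T$. The only delicate point I anticipate is the implication $\tau^G\in\Irr(G)\Rightarrow\tau^W\in\Irr(W)$, but this is a standard consequence of the non-negative integrality of character multiplicities.
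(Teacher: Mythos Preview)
Your proof is correct and follows essentially the same route as the paper's: pass to the maximal subgroup $W$ with its unique $\pi$-special extension $\gamma$, and use Lemma~\ref{res} to obtain $\gamma_T=\tau$. The paper's argument stops at that point, whereas you additionally spell out the equality $W=T$ via irreducibility of $\tau^W$ and Frobenius reciprocity---a welcome clarification rather than a genuinely different method.
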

\begin{proof}
Since $|T:H|$ is a $\pi'$-number, it follows by Lemma \ref{res} that $\beta$ is irreducible.
Suppose that $W$ is the largest subgroup of $G$ to which $\beta$ extends
and let $\gamma$ be the unique $\pi$-special extension of $\beta$ to $W$.
Then $T\le W$, and by Lemma \ref{res} again, we see that $\gamma_T$ is also $\pi$-special,
and since both $\gamma_T$ and $\tau$ are extensions of $\beta$,
we can deduce that $\gamma_T=\tau$. This completes the proof.
\end{proof}

Now we collect some facts of the theory of self-stabilizing pairs needed for our purpose.
Fix a set $\pi$ of primes, and let $G$ be a $\pi$-separable group.
By a {\bf normal $\pi$-series} $\NN=\{N_i\}$ of length $n$ of $G$, we mean that
a series of normal subgroups
$1=N_0\le N_1\le \cdots \le N_n=G$,
such that $N_{i+1}/N_i$ is either a $\pi$-group or a $\pi'$-group for $i=0,1,\ldots,n-1$.

\begin{defn}\label{ssp}
Let $\mathcal{N}=\{N_i\}$ be a normal $\pi$-series of a $\pi$-separable group $G$,
and let $(T,\tau)$ be a character pair of $G$.
Write $T_i=T\cap N_i$.
Then $(T,\tau)$ is a {\bf self-stabilizing pair} for $\NN$
if for all $i$, the restriction $\tau_{T_i}$ has a unique irreducible constituent $\tau_i$
and $T_{i+1}$ is the stabilizer of the pair $(T_i,\tau_i)$ in $N_{i+1}$.
\end{defn}

Observe that if $(T,\tau)$ is a self-stabilizing pair for $\NN$, then we have the following series of character pairs:
$$(1,1)=(T_0,\tau_0)\le (T_1,\tau_1) \le \cdots \le (T_n,\tau_n)=(T,\tau),$$
which is called a {\bf self-stabilizing chain} for $\NN$.
Writing $\chi=\tau^G$, we see that $\chi$ is irreducible by the following lemma,
and we also say that $(T,\tau)$ is a self-stabilizing pair for $\chi$ with respect to $\NN$.
Furthermore, by Lemmas 3.2 and 3.3 of \cite{L2006},
we know that all the self-stabilizing pairs for $\chi$ with respect to $\NN$ are conjugate in $G$.

\begin{lem}\label{IS}
Let $G$, $\NN$, $(T,\tau)$ and $(T_i,\tau_i)$ be as above.
Then the following hold for all $i$.

{\rm (1)} Induction defines an injection $\Irr({\bf N}_G(T_i,\tau_i)|\tau_i)\to\Irr(G)$.

{\rm (2)} $(\tau_i)^{N_i}$ is irreducible.
\end{lem}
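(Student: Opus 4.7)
The plan is to prove both parts together by induction on $i$, the base case $i=0$ being trivial since $T_0=N_0=1$ and $\tau_0=1$. For the inductive step I would first record a few structural facts: $T_j\NM T_{j+1}$, $T_{i+1}\cap N_i = T_i$, and $\tau_j$ lies under $\tau_{j+1}$ (the last from the homogeneity of $\tau_{T_{j+1}}$ restricted further to $T_j$). Combined with the defining property $T_{i+1} = \bN_{N_{i+1}}(T_i,\tau_i)$, the equality $T_{i+1}\cap N_i = T_i$ upgrades to $\bN_{N_i}(T_i,\tau_i) = T_i$, which is the self-stabilization identity inside $N_i$ that will substitute for normality of $T_i$ in $N_i$. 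One also checks that $(T_i,\tau_i)$ is itself a self-stabilizing pair in $N_i$ for the truncated series $\{N_0,\ldots,N_i\}$, so the conjugacy-of-self-stabilizing-pairs theorem (Lemmas 3.2 and 3.3 of \cite{L2006}) is available at that level.

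For part (2), set $\psi := \tau_i^{N_i}$, which is irreducible by the inductive hypothesis. The strategy is to apply the ordinary Clifford correspondence to $N_i\NM N_{i+1}$ after identifying the inertia group $I_{N_{i+1}}(\psi) = T_{i+1}N_i$. The inclusion $\supseteq$ is immediate from the conjugation formula for induced characters; for $\subseteq$, an element $n\in N_{i+1}$ fixing $\psi$ produces a second self-stabilizing pair $(T_i^n,\tau_i^n)$ in $N_i$ whose induction to $N_i$ equals $\psi$, hence $N_i$-conjugate to $(T_i,\tau_i)$ by the conjugacy theorem, so after correcting $n$ by the $N_i$-conjugator it lies in $T_{i+1}$. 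It then remains to show $\tau_{i+1}^{T_{i+1}N_i}$ is irreducible and lies over $\psi$: the latter follows from a Mackey computation giving restriction $e\psi$ to $N_i$, and the former reduces, via a second Clifford-style argument internal to $T_{i+1}N_i$, to the stabilizer identity above. The Clifford correspondence over $\psi$ then yields $\tau_{i+1}^{N_{i+1}}\in\Irr(N_{i+1})$.

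For part (1), one iterates the preceding analysis up the chain $N_i\le N_{i+1}\le\cdots\le N_n = G$. At each level $j\ge i$, the subgroup $\bN_G(T_i,\tau_i)\cap N_{j+1}$ is the full inertia group in $N_{j+1}$ of the irreducible character produced at stage $j$, again by the conjugacy-of-self-stabilizing-pairs mechanism. Composing these stepwise Clifford correspondences delivers the desired injection $\Irr(\bN_G(T_i,\tau_i)| \tau_i)\hookrightarrow \Irr(G)$.

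The main obstacle throughout is the inertia-group identification $I_{N_{i+1}}(\psi) = T_{i+1}N_i$. Because $T_i$ need not be normal in $N_i$, ordinary Clifford correspondence does not apply directly to $(T_i,\tau_i)$, and Lewis's conjugacy of self-stabilizing pairs is precisely the tool that lets one recover a ``stabilizer equals inertia group'' statement in this more flexible setting.
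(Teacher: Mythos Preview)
The paper does not prove this lemma itself; it simply cites Lemma 3.3 of \cite{L2006}. Your sketch is a correct reconstruction of the inductive argument behind that citation and is essentially how Lewis proceeds: establish (2) by induction on $i$, using conjugacy of self-stabilizing pairs inside $N_i$ (available from the simultaneous induction at lower levels, so there is no circularity in invoking it) to identify $I_{N_{i+1}}(\tau_i^{N_i})=T_{i+1}N_i$, and then apply the Clifford correspondence. One simplification worth noting for part (1): once (2) and the identity $\bN_{N_i}(T_i,\tau_i)=T_i$ are in hand, a single application of the inductive-source bijection (Lemma 2.12 of \cite{I2018}, which the present paper itself invokes later) gives directly that induction is a bijection $\Irr(\bN_G(T_i,\tau_i)\mid\tau_i)\to\Irr(G\mid\tau_i^{N_i})$, so your level-by-level iteration up the chain $N_i\le N_{i+1}\le\cdots\le G$ can be replaced by one step.
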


\begin{proof}
This is a special case of Lemma 3.3 of \cite{L2006}.
\end{proof}

Following Lewis \cite{L2006}, we use $\B(G:\mathcal{N})$
to denote the set of characters $\chi\in\Irr(G)$
that have a self-stabilizing pair $(T,\tau)$ in which $\tau$ is $\pi$-special,
and for convenience, we write $\BP(G:\NN)$ for those characters $\chi\in\B(G:\NN)$
such that $|G:T|$ is a $\pi'$-number.

\begin{lem}\label{sat}
Let $\NN$ be a normal $\pi$-series of a $\pi$-separable group $G$,
and suppose that $(T,\tau)$ is a self-stabilizing pair for $\NN$ where $\tau$ is $\pi$-special.
Then the map $\mu\mapsto (\tau\mu)^G$ is an injection from the set $\Y(T)$ of $\pi'$-special characters of $T$ into $\Irr(G)$.
\end{lem}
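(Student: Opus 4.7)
The plan is to show that $(T,\tau\mu)$ is itself a self-stabilizing pair for $\NN$, so that $(\tau\mu)^G\in\Irr(G)$ will follow from Lemma~\ref{IS}(2) applied to this new pair. Lemma~\ref{prod} immediately gives $\tau\mu\in\Irr(T)$ together with the uniqueness of the $\pi/\pi'$-factorization of irreducible characters of the $\pi$-separable group $T$.

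The core step is to produce, inductively in $i$, a $T$-invariant $\pi'$-special character $\theta_i\in\Irr(T_i)$ such that $\mu_{T_i}$ is homogeneous with $\theta_i$ as its unique irreducible constituent; one starts from $\theta_0=1_{T_0}$. Granted this, $(\tau\mu)_{T_i}=\tau_{T_i}\mu_{T_i}$ is a positive multiple of the irreducible character $\tau_i\theta_i$ (by Lemma~\ref{prod}), and the stabilizer of $(T_i,\tau_i\theta_i)$ in $N_{i+1}$ coincides with $T_{i+1}$: the uniqueness of the $\pi/\pi'$-factorization forces any element stabilizing $\tau_i\theta_i$ to stabilize $\tau_i$ and $\theta_i$ independently, $T_{i+1}$ is the stabilizer of $(T_i,\tau_i)$ in $N_{i+1}$ by the self-stabilizing hypothesis on $(T,\tau)$, and $T_{i+1}\le T$ already stabilizes the $T$-invariant $\theta_i$. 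At a $\pi$-layer (that is, when $T_{i+1}/T_i$ is a $\pi$-group), any irreducible constituent $\psi$ of $\mu_{T_{i+1}}$ lying over $\theta_i$ is $\pi'$-special and satisfies $\psi_{T_i}=f\theta_i$ with $f=\psi(1)/\theta_i(1)$ a $\pi'$-number dividing the $\pi$-number $[T_{i+1}:T_i]$; hence $f=1$, so $\psi$ extends $\theta_i$, and Lemma~\ref{normal}(3) with the roles of $\pi$ and $\pi'$ interchanged identifies $\psi$ as the unique $\pi'$-special extension $\theta_{i+1}$.

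For injectivity, suppose $(\tau\mu)^G=(\tau\mu')^G$ with $\mu,\mu'\in\Y(T)$. Then $(T,\tau\mu)$ and $(T,\tau\mu')$ are self-stabilizing pairs for the same irreducible character of $G$, and by the $G$-conjugacy of self-stabilizing pairs (Lemmas~3.2 and 3.3 of~\cite{L2006}) there is $g\in G$ with $T^g=T$ and $(\tau\mu)^g=\tau\mu'$. The uniqueness in Lemma~\ref{prod} then forces $\tau^g=\tau$ and $\mu^g=\mu'$, so $g\in{\bf N}_G(T,\tau)$. Any such $g$ preserves the unique constituent $\tau_{n-1}$ of $\tau_{T_{n-1}}$, and the top-level self-stabilizing identity $T=\{x\in G:(T_{n-1},\tau_{n-1})^x=(T_{n-1},\tau_{n-1})\}$ then forces $g\in T$; since conjugation by $T$ acts trivially on $\Irr(T)$, this yields $\mu=\mu^g=\mu'$.

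The main obstacle I anticipate is the inductive step of the construction at a $\pi'$-layer, i.e., when $T_{i+1}/T_i$ is a $\pi'$-group: the degree-counting argument used at $\pi$-layers no longer forces $f=1$, and the clean application of Lemma~\ref{normal}(3) fails. To handle this case, one must exploit the self-stabilizing structure more delicately—specifically, the fact that $T_{i+1}$ is the \emph{full} stabilizer of $\tau_i$ in $N_{i+1}$—so as to force the $T$-orbit of irreducible constituents of $\mu_{T_{i+1}}$ to collapse to a single $T$-invariant $\pi'$-special character $\theta_{i+1}$.
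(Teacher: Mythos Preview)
The paper does not prove this lemma at all; it simply cites Corollary~3.6 of Lewis~\cite{L2010}. So there is no in-house argument to compare against, and your attempt to give a self-contained proof is already doing more than the paper does. Your $\pi$-layer step and your injectivity argument are both sound: once $(T,\tau\mu)$ is known to be self-stabilizing, the conjugacy of such pairs together with the factorization uniqueness in Lemma~\ref{prod} and the identity $\bN_G(T,\tau)\le \bN_G(T_{n-1},\tau_{n-1})=T$ do give $\mu=\mu'$.

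The genuine gap is exactly the one you flag, and the hint you offer does not close it. At a $\pi'$-layer you need $\mu_{T_{i+1}}$ to be homogeneous, but the only information available is that the constituents are $T$-conjugate $\pi'$-special characters lying over the (inductively unique) $\theta_i$; the condition that $T_{i+1}$ is the full $N_{i+1}$-stabilizer of $(T_i,\tau_i)$ constrains $\tau$, not $\mu$, and I do not see how to leverage it to collapse the $T$-orbit on $\Irr(T_{i+1}\mid\theta_i)$. In fact it is not clear that $(T,\tau\mu)$ is a self-stabilizing pair for $\NN$ in general: Lewis's Corollary~3.6 asserts only that $(\tau\mu)^G$ is irreducible and that $\mu\mapsto(\tau\mu)^G$ is injective, and his route to this (via his ``satellite'' machinery in \cite{L2010}) does not proceed by showing homogeneity of all the restrictions $(\tau\mu)_{T_i}$. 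If you want a self-contained argument, you will likely need either to import that machinery or to replace the goal ``$(T,\tau\mu)$ is self-stabilizing'' by a weaker inductive statement that still yields irreducibility of $(\tau\mu)^G$ (for instance, by inducting on the length of $\NN$ and using Lemma~\ref{IS}(1) at the top layer together with the Clifford correspondence, rather than demanding homogeneity at every level).
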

\begin{proof}
This is exactly Corollary 3.6 of \cite{L2010}.
\end{proof}

Finally, we need a deep result of Wolf,
which we apply to present a new proof of the McKay conjecture for $p$-solvable groups.

\begin{lem}\label{Wolf}
Let $H$ be a Hall $\pi'$-subgroup of a $\pi$-separable group $G$.
Then the number of $\pi$-special characters of $G$ is equal to
$|\Irr({\bf N}_G(H)/H)|$.
\end{lem}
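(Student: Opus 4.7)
The plan is induction on $|G|$. The base case $G$ a $\pi$-group is immediate: then $H=1$, ${\bf N}_G(H)/H = G$, and $\X(G) = \Irr(G)$, so both sides equal $|\Irr(G)|$.

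For the inductive step, I would first reduce modulo $O_{\pi'}(G)$. A $\pi$-special character $\theta$ of a $\pi'$-group $N$ must be trivial, since $\theta(1)$ and $o(\theta)$ are simultaneously $\pi$-numbers and divisors of $|N|$, forcing each to equal $1$. By Lemma~\ref{normal}(1) every $\chi \in \X(G)$ contains $O_{\pi'}(G)$ in its kernel, so inflation yields a bijection $\X(G/O_{\pi'}(G)) \leftrightarrow \X(G)$. Since $O_{\pi'}(G) \le H$, the image $H/O_{\pi'}(G)$ is a Hall $\pi'$-subgroup of the quotient and a direct computation gives ${\bf N}_{G/O_{\pi'}(G)}(H/O_{\pi'}(G))/(H/O_{\pi'}(G)) \cong {\bf N}_G(H)/H$, so induction closes this case.

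Assume now $O_{\pi'}(G) = 1$ and $G$ is not a $\pi$-group, so $K := O_\pi(G) \neq 1$ by $\pi$-separability. I would apply Clifford theory over $K$: because $K$ is a $\pi$-group, every $\lambda \in \Irr(K)$ is automatically $\pi$-special. For each $G$-orbit on $\Irr(K)$ pick a representative $\lambda$ with stabilizer $T = I_G(\lambda)$. The existence of a $\pi$-special character of $G$ over $\lambda$ forces $|G:T|$ to be a $\pi$-number, and the Clifford correspondence identifies $\X(G \mid \lambda)$ with the set of $\pi$-special $\tau \in \Irr(T \mid \lambda)$. When $T < G$, I conjugate so $H \le T$; then $H$ is Hall $\pi'$ in $T$ and ${\bf N}_T(H) = {\bf N}_G(H) \cap T$, so the inductive hypothesis applies to $T$.

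The main obstacle is the $G$-invariant case $\lambda \in \Irr(K)^G$, where $T = G$ offers no reduction in order. I would handle this by a character-triple / Gallagher-type descent: $\lambda$ extends (possibly projectively) to a $\pi$-special $\hat\lambda$ on $G$ using the extension results behind Lemma~\ref{normal}(3) and the machinery of \cite{I2018}, and then Gallagher's theorem parametrizes $\X(G \mid \lambda)$ by $\X(G/K)$, on which induction applies. Matching the decomposition of $\Irr({\bf N}_G(H)/H)$ against the $H$-orbits on $\Irr(K)$ via the Glauberman correspondence (coprime action of $H$ on $K$) aligns the two sides orbit-by-orbit. Because this patching is the delicate part, an attractive alternative approach is to use self-stabilizing pairs (Lemma~\ref{IS} and Lemma~\ref{sat}) along the upper $\pi$-series $1 \le O_\pi(G) \le O_{\pi,\pi'}(G) \le \cdots \le G$ to associate to each $\chi \in \X(G)$ a canonical $G$-conjugacy class of self-stabilizing pairs with $\pi$-special tip, and then track these against $\Irr({\bf N}_G(H)/H)$ by following how ${\bf N}_G(H)$ intersects the stabilizer chain.
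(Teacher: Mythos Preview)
The paper does not prove this lemma at all; it simply cites Corollary~1.16 of \cite{W1990}. So there is no ``paper's proof'' to compare against---your proposal is an independent attempt at Wolf's theorem.

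Your overall architecture (induction on $|G|$, reduction modulo $O_{\pi'}(G)$, then Clifford theory over $K=O_\pi(G)$ combined with the Glauberman correspondence for the coprime action of $H$ on $K$) is indeed the shape of Wolf's original argument. The base case and the $O_{\pi'}$-reduction are fine. But the two steps you yourself flag as delicate are not merely delicate; they are the entire substance of the proof, and your sketch does not supply them.

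First, in the $G$-invariant case you claim that $\lambda$ extends (``possibly projectively'') to a $\pi$-special character of $G$ and that Gallagher then parametrizes $\X(G\mid\lambda)$ by $\X(G/K)$. Lemma~\ref{normal}(3) only gives a $\pi$-special extension when $G/K$ is a $\pi'$-group, which is not the situation here. What is actually needed is a character-triple isomorphism compatible with $\pi$-specialness; such a thing exists in Isaacs' theory, but it is a genuine ingredient, not a corollary of the lemmas available in this paper, and invoking it by name is not a proof.

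Second, the ``patching'' is where all the work lies. You must show that the $G$-orbits on $\Irr(K)$ whose stabilizers have $\pi$-index correspond bijectively to the ${\bf N}_G(H)$-orbits on $\Irr({\bf C}_K(H))$, and that under this correspondence the fibre sizes $|\X(T\mid\lambda)|$ and the corresponding pieces of $\Irr({\bf N}_G(H)/H)$ agree. This requires the Glauberman correspondence together with a Frattini-type argument identifying ${\bf N}_T(H)$ with the stabilizer of the Glauberman correspondent in ${\bf N}_G(H)$, and then a compatible inductive count inside $T$ relative to $\lambda$ (not merely $|\X(T)|$, which is what the raw inductive hypothesis gives). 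None of this is carried out. Your alternative via self-stabilizing pairs is too vague to evaluate. As it stands, the proposal is a reasonable outline of Wolf's strategy, not a proof.
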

\begin{proof}
See Corollary 1.16 of \cite{W1990}.
\end{proof}

\section{Proofs}
In this section, we will prove our theorems in the introduction.
As in \cite{IN2001}, we will work with $\pi$-separable groups instead of $p$-solvable groups.
We begin by introducing some notation.

Fix a set $\pi$ of primes, and let $G$ be a $\pi$-separable group.
Given a normal $\pi$-series $\NN=\{N_i\}$ of length $n$ of $G$,
we say that $\chi\in\Irr(G)$ is {\bf $(\NN,\pi)$-stable}
if one (and hence every) irreducible constituent of the restriction $\chi_{N_i}$ is invariant in $N_{i+1}$ whenever $N_{i+1}/N_i$ is a $\pi$-group,
and we use $\Irr(G,\NN)$ to denote the set of $(\NN,\pi)$-stable irreducible characters of $G$
(with the set $\pi$ of primes understood).
It is easy to see that $\chi\in\Irr(G,\NN)$ if and only if
there exist characters $\theta_i\in\Irr(N_i)$ with
$$(N_0,\theta_0)\le (N_1,\theta_1)\le \cdots \le (N_n,\theta_n)=(G,\chi),$$
and such that each $\theta_i$ is fixed by $N_{i+1}$ whenever $N_{i+1}/N_i$ is a $\pi$-group.

More generally, if $T$ is a subgroup of $G$,
we define the \emph{restriction} of $\mathcal{N}$ to $T$ as
$\NN_T=\{T\cap N_i\}$. It is clear that $\NN_T$ is a normal $\pi$-series of $T$,
and for notational simplicity, we will always use $\Irr(T,\NN)$ instead of $Irr(T,\NN_T)$.
In particular, if $H$ is a Hall $\pi$-subgroup of $G$ and $\alpha\in\Irr(H)$,
then $\alpha$ is $(\NN_H,\pi)$-stable if and only if $\alpha_{H\cap N}$ is homogeneous
for each $N\in\NN$. Furthermore, if $H\le T$,
then we have $\Irr(H,\NN)=\Irr(H,\NN_T)=\Irr(H,\NN_H)$.

We need the following elementary property of normal $\pi$-series.

\begin{lem}\label{hom}
Let $G$ be a $\pi$-separable group,
$\mathcal{N}=\{N_i\}$ a normal $\pi$-series of length $n$ of $G$,
and $H$ a Hall $\pi$-subgroup of $G$. Suppose that $\alpha\in\Irr(H,\NN)$.
Then the following hold.

{\rm (1)} For any $m\in\{0,1,\ldots,n\}$, let $\alpha_m$ be the unique
irreducible constituent of $\alpha_{H\cap N_m}$ and write
$$\NN_m=\{1 = N_0, N_1, \ldots, N_m \}.$$
Then $\NN_m$ is a normal $\pi$-series of $N_m$
and $\alpha_m\in\Irr(H,\NN_m)$.

{\rm (2)} $\alpha^g\in\Irr(H^g,\NN)$ for any $g\in G$.
\end{lem}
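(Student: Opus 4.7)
The plan is to unwind definitions on both parts carefully; no serious obstacle is expected.

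For part (1), I first observe that $\NN_m$ is a normal $\pi$-series of $N_m$ essentially for free: each $N_i$ with $i\le m$ is normal in $G$ and hence in $N_m$, while the quotients $N_{j+1}/N_j$ for $j<m$ are identical to those in $\NN$ and so remain either $\pi$-groups or $\pi'$-groups. The substantive part is the claim about $\alpha_m$. Since $\alpha\in\Irr(H,\NN)$, the restriction $\alpha_{H\cap N_m}$ is homogeneous by the characterization recalled just before the lemma, so $\alpha_m$ is well-defined and I may write $\alpha_{H\cap N_m}=e\alpha_m$ for a positive integer $e$.

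To verify $\alpha_m\in\Irr(H\cap N_m,\NN_m)$, I need to show that $(\alpha_m)_{H\cap N_i}$ is homogeneous for every $i\le m$. Transitivity of restriction gives
\[
\alpha_{H\cap N_i}=(\alpha_{H\cap N_m})_{H\cap N_i}=e\,(\alpha_m)_{H\cap N_i},
\]
and since the left-hand side is homogeneous by the hypothesis on $\alpha$, the right-hand side is too. Concretely, writing $\alpha_{H\cap N_i}=f\beta$ for some $\beta\in\Irr(H\cap N_i)$, one gets $(\alpha_m)_{H\cap N_i}=(f/e)\beta$, and $e\mid f$ because the left side is an honest character; thus the unique irreducible constituent of $(\alpha_m)_{H\cap N_i}$ is $\beta$.

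For part (2), I use that each $N_i$ is normal in $G$, so $N_i^{g}=N_i$ and consequently $H^g\cap N_i=(H\cap N_i)^g$. Therefore
\[
(\alpha^g)_{H^g\cap N_i}=(\alpha^g)_{(H\cap N_i)^g}=(\alpha_{H\cap N_i})^g,
\]
and conjugation is a bijection on irreducible characters preserving multiplicities, so homogeneity of $\alpha_{H\cap N_i}$ transfers to its conjugate. Hence $\alpha^g\in\Irr(H^g,\NN)$. The only mild care needed throughout is to keep the two stability conditions — for $\alpha$ on the full series $\{H\cap N_i\}$ and for $\alpha_m$ on the truncated series $\{H\cap N_i\}_{i\le m}$ — aligned via transitivity of restriction; nothing deeper appears to be required.
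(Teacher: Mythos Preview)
Your proof is correct and is essentially a careful unwinding of the definitions, which is exactly the paper's approach: the paper's own proof is the single sentence ``It follows immediately from definition.'' Your argument supplies precisely the details behind that sentence, using transitivity of restriction for part~(1) and the normality of the $N_i$ in $G$ for part~(2).
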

\begin{proof}
It follows immediately from definition.
\end{proof}

The following fundamental result is an essential ingredient in our proofs.

\begin{lem}\label{key}
Let $G$ be $\pi$-separable with Hall $\pi$-subgroup $H$,
and let $\NN$ be a normal $\pi$-series of $G$.
For each $\alpha\in\Irr(H,\NN)$,
let $W$ be the unique largest subgroup of $G$ to which $\alpha$ extends
and let $\gamma$ be the unique $\pi$-special extension of $\alpha$ to $W$.
Then $(W,\gamma)$ is a self-stabilizing pair for $\NN$.
\end{lem}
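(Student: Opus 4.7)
The plan is to proceed by induction on the length $n$ of the normal $\pi$-series $\NN=\{N_i\}$. The base case $n=1$ is immediate: either $G$ is a $\pi$-group (so $W=G$ and $\gamma=\alpha$) or $G$ is a $\pi'$-group (so $H=1$, $\alpha=1$, $W=G$, $\gamma=1$), and in each case the chain $(1,1)\le(G,\gamma)$ is trivially self-stabilizing. For the inductive step with $n\ge 2$, I would apply the induction hypothesis to the truncated series $\NN'=\{N_0,\ldots,N_{n-1}\}$ viewed as a normal $\pi$-series of $N_{n-1}$, together with the character $\alpha_{n-1}\in\Irr(H\cap N_{n-1},\NN')$ furnished by Lemma~\ref{hom}(1). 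This produces a self-stabilizing pair $(W',\gamma')$ for $\NN'$, where $W'$ is the largest subgroup of $N_{n-1}$ to which $\alpha_{n-1}$ extends and $\gamma'$ is its unique $\pi$-special extension.

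The crux is to show that $W\cap N_{n-1}=W'$, that the unique irreducible constituent of $\gamma|_{W'}$ is $\gamma'$, and that $W$ is the full stabilizer of $(W',\gamma')$ in $N_n=G$; combined with the inductive self-stabilizing chain for $(W',\gamma')$, these three assertions immediately assemble into the desired self-stabilizing chain for $(W,\gamma)$. I would prove them by splitting on the type of the top factor. When $N_n/N_{n-1}$ is a $\pi'$-group, one has $H\le N_{n-1}$ and $\alpha=\alpha_{n-1}$; the restriction $\gamma|_{W\cap N_{n-1}}$ is still $\pi$-special by Lemma~\ref{res} (the index $|W:W\cap N_{n-1}|$ is a $\pi'$-number), so $\alpha$ extends to $W\cap N_{n-1}$ and hence $W\cap N_{n-1}\le W'$. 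The reverse inclusion is immediate from the maximality of $W$ applied to $\gamma'$, and $\gamma|_{W'}=\gamma'$ then follows by the uniqueness in Lemma~\ref{normal}(3). For the stabilizer property, given $s\in N_G(W',\gamma')$, I would extend $\gamma'$ $\pi$-specially to $\langle W,s\rangle$ via Lemma~\ref{normal}(3) and invoke the maximality of $W$ to conclude $s\in W$.

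The main obstacle is the other case, where $N_n/N_{n-1}$ is a $\pi$-group: now $H\not\subseteq N_{n-1}$, and restriction of $\gamma$ to $W\cap N_{n-1}$ no longer preserves $\pi$-speciality on index grounds. The rescue comes from Lemma~\ref{normal}(1): since $W\cap N_{n-1}\vartriangleleft W$, every irreducible constituent of $\gamma|_{W\cap N_{n-1}}$ is $\pi$-special; moreover $|W\cap N_{n-1}:H\cap N_{n-1}|$ is a $\pi'$-number, so Lemma~\ref{res} forces each such constituent to restrict irreducibly to $H\cap N_{n-1}$. Because $\alpha\in\Irr(H,\NN)$ makes $\alpha|_{H\cap N_{n-1}}$ a multiple of $\alpha_{n-1}$, each constituent is actually a $\pi$-special extension of $\alpha_{n-1}$, and uniqueness forces $\gamma|_{W\cap N_{n-1}}$ to be homogeneous with unique constituent equal to $\gamma'$. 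The containment $W'\le W$ then follows by observing that $H$ normalizes $W'$ and fixes $\gamma'$ (from $H$-invariance of $\alpha_{n-1}$), so Lemma~\ref{normal}(2) together with a Clifford-type argument produces a $\pi$-special extension of $\alpha$ on $HW'$, whence $HW'\le W$ by maximality. Coordinating these uses of $\pi$-special character theory with the invariance data built into $\alpha\in\Irr(H,\NN)$ is the technical heart of the argument.
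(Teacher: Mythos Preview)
Your inductive scheme is the same as the paper's, and your treatment of the $\pi'$-factor case is essentially correct (though to apply Lemma~\ref{normal}(3) you must first check that $\langle W,s\rangle/W'$ is a $\pi'$-group; this needs $\bN_{N_{n-1}}(W',\gamma')=W'$, which follows from the inductive hypothesis via Lemma~\ref{IS}, a step you skipped). However, your handling of the $\pi$-factor case has two genuine gaps.

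First, the claim that ``Lemma~\ref{normal}(2) together with a Clifford-type argument produces a $\pi$-special extension of $\alpha$ on $HW'$'' is not justified. Lemma~\ref{normal}(2) only tells you that characters of $HW'$ lying over $\gamma'$ are $\pi$-special; it does not produce one that restricts to $\alpha$ on $H$. Since $\alpha_{H\cap N_{n-1}}$ is merely a \emph{multiple} of $\alpha_{n-1}$ (not necessarily equal to it), you cannot simply glue $\alpha$ and $\gamma'$. The paper closes this gap uniformly for all $i$ by invoking Lemma~2.11 of \cite{I2018}, which in the situation $UH=W'H$, $U\NM UH$, $\mu=\gamma'$ invariant with $\mu_{H\cap U}$ irreducible, guarantees that $\alpha$ extends to $UH$. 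You need an explicit result of this type; ordinary Clifford theory is not enough.

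Second, and more seriously, you do not verify the stabilizer condition $W=\bN_G(W',\gamma')$ in the $\pi$-factor case at all; your proposal simply stops after establishing $W\cap N_{n-1}=W'$. The paper's argument here is short but essential: writing $T=\bN_G(W',\gamma')$, one has $T\cap N_{n-1}=W'$ (from Lemma~\ref{IS}) and, since $H\le W\le T$ and $G/N_{n-1}$ is a $\pi$-group, both $WN_{n-1}=G$ and $TN_{n-1}=G$; comparing orders forces $|W|=|T|$ and hence $W=T$. Without this step the self-stabilizing chain does not close up at the top.
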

\begin{proof} We use the notation in Lemma \ref{hom}.
For each $i\in\{0,1,\ldots,n\}$, let $\alpha_i\in\Irr(H\cap N_i)$ be the unique irreducible constituent of $\alpha_{H\cap N_i}$,
and let $W_i=W\cap N_i$, so that $H\cap N_i \le W_i\le N_i$.
Since $\gamma_{H\cap N_i}=(\gamma_H)_{H\cap N_i}=\alpha_{H\cap N_i}$ is a multiple of $\alpha_i$,
it follows that each irreducible constituent of $\gamma_{W_i}$ lies over $\alpha_i$.
But $W_i$ is normal in $W$, so these constituents are all $\pi$-special
(see Lemma \ref{normal}),
and by Lemma \ref{res}, we see that $\gamma_{W_i}$
has a unique irreducible constituent, which we call $\gamma_i$.
In particular, $\gamma_i$ is $\pi$-special and invariant in $W$.
Furthermore, since $H\cap N_i$ is a Hall $\pi$-subgroup of $W_i$,
it follows that $\gamma_i$ is an extension of $\alpha_i$ to $W_i$.
Observe that $W_i\le W_{i+1}$ for each $i$, so $\gamma_i$ lies under $\gamma_{i+1}$.

We claim that $W_i$ is the unique maximal subgroup of $N_i$ to which $\alpha_i$ extends.
To see this, let $U$ be the unique largest subgroup with $H\cap N_i\le U\le N_i$,
and such that $\alpha_i$ extends to $U$.
Then $W_i\le U$ and $\alpha_i$ has a unique $\pi$-special extension $\mu\in\Irr(U)$.
Since $H$ normalizes both $N_i$ and $H\cap N_i$ and fixes $\alpha_i$,
the uniqueness of the pair $(U,\mu)$ implies that
$H$ necessarily normalizes $U$ and fixes $\mu$.
In particular, $UH$ is a subgroup of $G$ with $U\NM UH$,
and $\mu$ is invariant in $UH$.
Note that $H\cap U=H\cap (N_i\cap U)=H\cap N_i$,
and thus Lemma 2.11 of \cite{I2018} guarantees that $\alpha$ extends to $UH$.
By the maximality of $W$, we have $UH\le W$, and hence $U\le W\cap N_i=W_i$.
This proves that $U=W_i$, as claimed.

Now we proceed by induction on $|G|$ to show that $(W,\gamma)$ is
a self-stabilizing pair for $\NN$.
We may assume without loss that $(W_{n-1},\gamma_{n-1})$ is
a self-stabilizing pair for the normal $\pi$-series $\NN_{n-1}$ of
the $\pi$-separable group $N_{n-1}$.
Since we have seen that $\gamma_{W_i}$ is a multiple of $\gamma_i$ for each $i$,
it suffices by definition to show that $W$ is the stabilizer
of the pair $(W_{n-1},\gamma_{n-1})$ in $G$.
For notational convenience, we write $N=N_{n-1}$, $\gamma'=\gamma_{n-1}$ and $\alpha'=\alpha_{n-1}$, so that $W\cap N=W_{n-1}$ and $G/N$
is either a $\pi$-group or a $\pi'$-group.
Let $T={\bf N}_G(W\cap N,\gamma')$ be the stabilizer of $(W\cap N,\gamma')$ in $G$.
Then $W\le T$, and we want to show that $W=T$.

To do this, note that $T\cap N$ is the stabilizer of $\gamma'$ in $N$,
and since $(\gamma')^N$ is irreducible (see Lemma \ref{IS}),
it follows that $T\cap N=W\cap N$.
If $G/N$ is a $\pi$-group, then $TN=G$ because $H\le W\le T$,
and thus $T=W$, as wanted.
So we can assume that $G/N$ is a $\pi'$-group,
and in this case, we have $H\le N$, which implies that
$\alpha=\alpha'$ and that $T/W\cap N$ is a $\pi'$-group.
Since $\gamma'$ is $\pi$-special, it follows
by Lemma \ref{normal} that $\gamma'$ can be extended to $T$,
and thus $\alpha$ also extends to $T$. This forces $T=W$, and
the result follows.
\end{proof}

We now generalize the definition of the map $\Psi$
introduced by Isaacs and Navarro in \cite{IN2001}.
Suppose that $H$ is a Hall $\pi$-subgroup of a $\pi$-separable group $G$,
and let $\alpha\in\Irr(H)$.
By Corollary 3.15 and Theorem 3.16 of \cite{I2018},
there exists a unique largest subgroup $W$ with $H\le W\le G$,
and such that $\alpha$ has a unique $\pi$-special extension $\gamma\in\Irr(W)$.
In this situation, we can define a new map, which we still call $\Psi$,
$$\Psi:\Irr(H)\to\Char(G),\;\alpha\mapsto \gamma^G,$$
from the set $\Irr(H)$ of irreducible characters of $H$
into the set $\Char(G)$ of characters of $G$ by setting $\Psi(\alpha)=\gamma^G$.
For convenience, we say that $\Psi$ is the {\bf standard map} associated with $H$.

The following are some basic properties of the standard map,
which play an important role for our purpose.

\begin{thm}\label{C'}
Let $G$ be a $\pi$-separable group with Hall $\pi$-subgroup $H$,
and let
$$\Psi: \Irr(H)\to\Char(G)$$ be the standard map defined as above.
Suppose that $\NN$ is a normal $\pi$-series of $G$. Then the following hold.

{\rm (1)} If $\alpha\in\Irr(H,\NN)$, then $\Psi(\alpha)\in\Irr(G)$.

{\rm (2)} $\Psi(\Irr(H,\NN))=\BP(G:\NN)$.

{\rm (3)} If $\alpha,\beta\in\Irr(H,\NN)$, then
$\Psi(\alpha)=\Psi(\beta)$ if and only if $\alpha$ and $\beta$ are ${\bf N}_G(H)$-conjugate.

{\rm (4)} For each $\alpha\in\Irr(H,\NN)$,
let $(W,\gamma)$ be as in Lemma \ref{key}.
Then the map $\delta\mapsto(\gamma\delta)^G$
is an injection from the set $\Y(W)$ of $\pi'$-special characters $\delta$ of $W$
into $\Irr(G)$. Write
$$\Irr_\alpha(G)=\{(\gamma\delta)^G\,|\,\delta\in \Y(W)\},$$
so that $\Irr_\alpha(G)\subseteq\Irr(G)$. Also, if $\chi\in\Irr_\alpha(G)$, then $\chi(1)_\pi=\alpha(1)$.

{\rm (5)} Let $\alpha,\beta\in\Irr(H,\NN)$.
If $\alpha$ and $\beta$ are ${\bf N}_G(H)$-conjugate,
then $\Irr_\alpha(G)=\Irr_\beta(G)$,
and otherwise, $\Irr_\alpha(G)$ and $\Irr_\beta(G)$ are disjoint.
\end{thm}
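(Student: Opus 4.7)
The plan is to work through the five parts in order, leaning on Lemma \ref{key} throughout, which says that the pair $(W,\gamma)$ attached to $\alpha \in \Irr(H,\NN)$ is self-stabilizing for $\NN$. Part (1) is immediate from this via the top-level case $i=n$ of Lemma \ref{IS}(2). For part (2), the containment $\Psi(\Irr(H,\NN)) \subseteq \BP(G:\NN)$ follows because $|G:W|$ divides the $\pi'$-number $|G:H|$. For the reverse containment, given $\chi \in \BP(G:\NN)$ with self-stabilizing pair $(T,\tau)$ (with $\tau$ $\pi$-special and $|G:T|$ a $\pi'$-number), I would replace $(T,\tau)$ by a $G$-conjugate to arrange $H \le T$ (using Hall conjugacy), set $\alpha = \tau_H$, identify $(T,\tau)$ with $(W,\gamma)$ via Lemma \ref{ext}, and verify $\alpha \in \Irr(H,\NN)$ by restricting each unique constituent $\tau_i$ of $\tau_{T \cap N_i}$ to the Hall $\pi$-subgroup $H \cap N_i$ and invoking Lemma \ref{res} to see the restriction remains irreducible, so $\alpha_{H \cap N_i}$ is homogeneous.

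For part (3), if $\Psi(\alpha) = \Psi(\beta)$, then $(W,\gamma)$ and $(W',\gamma')$ are two self-stabilizing pairs for the same irreducible character of $G$, so they are $G$-conjugate by the Lewis conjugacy principle cited after Definition \ref{ssp}. Choosing $g \in G$ with $(W^g,\gamma^g) = (W',\gamma')$, both $H$ and $H^g$ are Hall $\pi$-subgroups of $W'$, hence $W'$-conjugate by some $w$; then $h = gw \in \bN_G(H)$ and $\gamma^h = (\gamma')^w = \gamma'$ (the last equality because $\gamma'$ is class-invariant on $W'$), so $\alpha^h = \beta$. The converse is immediate from the uniqueness of $(W,\gamma)$ given $\alpha$. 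Part (4) is Lemma \ref{sat} applied to $(W,\gamma)$, together with the degree identity $\chi(1) = |G:W|\alpha(1)\delta(1)$, from which $\chi(1)_\pi = \alpha(1)$ because $|G:W|$ and $\delta(1)$ are $\pi'$-numbers and $\alpha(1)$ is a $\pi$-number. The forward direction of part (5) is routine: if $\beta = \alpha^h$ for some $h \in \bN_G(H)$, then $\delta \mapsto \delta^h$ bijects $\Y(W)$ with $\Y(W')$ and preserves the induced character, giving $\Irr_\alpha(G) = \Irr_\beta(G)$.

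The main obstacle is the reverse direction of part (5): assuming $(\gamma\delta)^G = (\gamma'\delta')^G$, I need to deduce that $(W,\gamma)$ and $(W',\gamma')$ are $G$-conjugate, after which the argument from part (3) reproduces the required $\bN_G(H)$-conjugacy of $\alpha$ and $\beta$. This is the disjointness counterpart of the injectivity in Lemma \ref{sat}: two self-stabilizing pairs $(W,\gamma)$ and $(W',\gamma')$ with $\pi$-special $\gamma,\gamma'$ yield the same character under their respective injections $\delta \mapsto (\gamma\delta)^G$ only when the pairs are $G$-conjugate. I would extract this from Lewis's development in \cite{L2010}, combined with the uniqueness of the $\pi/\pi'$-factorization from Lemma \ref{prod} to separate the $\pi$-special factor $\gamma$ from the $\pi'$-special factor $\delta$ on the pair side. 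Granted this uniqueness, parts (3) and (5) share the same final step of reducing to Hall conjugacy inside the common group $W' = W^g$.
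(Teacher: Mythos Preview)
Your treatment of parts (1)--(4) and the forward direction of (5) is correct and essentially identical to the paper's proof.

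The gap is in the disjointness direction of (5). You correctly isolate the crux---from $(\gamma\delta)^G=(\gamma'\delta')^G$ one wants $(W,\gamma)$ and $(W',\gamma')$ to be $G$-conjugate---but your plan to ``extract this from Lewis's development in \cite{L2010}'' is not a proof, and no such statement is among the Lewis results the paper imports. A direct appeal to the conjugacy of self-stabilizing pairs fails here because $(W,\gamma\delta)$ is \emph{not} in general a self-stabilizing pair for $\chi=(\gamma\delta)^G$: the restriction $\delta_{W\cap N_i}$ need not be homogeneous, so $(\gamma\delta)_{W\cap N_i}$ may have several irreducible constituents. Thus the Lewis conjugacy principle (stated after Definition~\ref{ssp}) does not apply to the pairs $(W,\gamma\delta)$ and $(W',\gamma'\delta')$, and Lemma~\ref{prod} alone cannot bridge the gap.

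The paper proves this step by induction on the length $n$ of $\NN$. Setting $N=N_{n-1}$ and using the claim inside the proof of Lemma~\ref{key}, the pairs $(W\cap N,\gamma_{0})$ and $(W'\cap N,\gamma'_{0})$ (with $\gamma_0,\gamma'_0$ the unique constituents of $\gamma_{W\cap N},\gamma'_{W'\cap N}$) are again the canonical pairs attached to $\alpha_{H\cap N},\beta_{H\cap N}$. One then produces constituents $(\gamma_0\delta_0)^N$ and $(\gamma'_0\delta'_0)^N$ of $\chi_N$, which are $G$-conjugate by Clifford. If $G/N$ is a $\pi'$-group, the Frattini argument $G=N\,\bN_G(H)$ lets one conjugate by an element of $\bN_G(H)$ and apply the inductive hypothesis in $N$. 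If $G/N$ is a $\pi$-group, one first shows---using parts (3)--(5) at level $N$---that both constituents are $G$-invariant, hence equal; induction gives $\alpha_{H\cap N}$ and $\beta_{H\cap N}$ conjugate in $\bN_N(H\cap N)$, and the conjugacy is then lifted to $\bN_G(H)$ via the Clifford correspondence $\Irr(W\mid\gamma_0\delta_0)\to\Irr(G\mid(\gamma_0\delta_0)^N)$ together with Lemma~\ref{prod}. This two-case induction is the substantive content of the theorem, and your proposal leaves it unaddressed.
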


\begin{proof}
By definition, we can write
$$\mathcal{N}=\{1 = N_0 \le N_1 \le \cdots \le N_n = G\},$$
where $N_i$ is normal in $G$ and $N_i/N_{i-1}$ is either a $\pi$-group or a $\pi'$-group for $i=1,\ldots,n$.
For convenience, we let $\alpha$ be an arbitrary but fixed character in $\Irr(H,\NN)$,
and write $\Psi(\alpha)=\gamma^G$,
where $W$ is the unique largest subgroup of $G$ to which $\alpha$ extends
and $\gamma\in\X(W)$ is the unique $\pi$-special extension of $\alpha$ to $W$.

(1) It is immediate by Lemma \ref{key} and Lemma \ref{IS}.

(2) For each $\alpha\in\Irr(H,\NN)$, we see from Lemma \ref{key} that $\Psi(\alpha)\in\BP(G:\NN)$, and thus $\Psi(\Irr(H,\NN))\subseteq \BP(G:\NN)$.
Conversely, let $\chi\in\BP(G:\NN)$.
Then there exists a self-stabilizing pair $(T,\tau)$ for $\NN$ with $\chi=\tau^G$,
and such that $|G:T|$ is a $\pi'$-number and $\tau$ is $\pi$-special.
Replacing $T$ by a conjugate if necessary, we can assume that $H\le T$.
Let $\beta=\tau_H$.
Then $\beta\in\Irr(H)$ and $T$ is the unique maximal subgroup of $G$
to which $\beta$ extends (see Lemma \ref{ext}). Since $\tau_{T\cap N_i}$ is
a multiple of some irreducible character $\tau_i$ of $T\cap N_i$
and $H\cap N_i$ is a Hall $\pi$-subgroup of $T\cap N_i$ for each $i$,
it follows that $(\tau_i)_{H\cap N_i}$ is irreducible and thus $\beta_{H\cap N_i}$
is homogeneous. So $\beta\in\Irr(H,\NN)$, and hence $\Psi(\beta)=\tau^G=\chi$. This establishes the reverse containment, thus proving (2).

(3) Assume first that $\alpha$ and $\beta$ are ${\bf N}_G(H)$-conjugate.
Then $\beta=\alpha^g$ and $H^g=H$ for some $g\in G$.
In this case, it is easy to see that $W^g$ is the largest subgroup to which $\beta$ extends
and that $\gamma^g$ is also the unique $\pi$-special extension of $\beta$ to $W^g$.
By definition, we have $\Psi(\beta)=(\gamma^g)^G=\gamma^G=\Psi(\alpha)$.

Conversely, assume that $\chi=\Psi(\alpha)=\Psi(\beta)$.
Then $\chi=\gamma^G=\nu^G$, where $V$ is the unique maximal
subgroup of $G$ to which $\beta$ extends and $\nu$ is the unique $\pi$-special extension
of $\beta$ to $V$.
By Lemma \ref{key}, we know that both $(W,\gamma)$ and $(V,\nu)$
are self-stabilizing pairs of $\chi$ with respect to $\NN$,
so they are conjugate in $G$. Thus $(V,\nu)=(W,\gamma)^x$ for some $x\in G$.
Now $H\le V=W^x$, so $H^{x^{-1}}$ is also a Hall $\pi$-subgroup of $W$.
It follows that $H^{x^{-1}}=H^w$ for some element $w\in W$,
and thus $y=wx\in{\bf N}_G(H)$ and $(V,\nu)=(W,\gamma)^y$.
This implies that $\nu=\gamma^y$, and
since $\nu_H=\beta$ and $\gamma_H=\alpha$, we have $\beta=\alpha^y$,
as required.

(4) By Lemma \ref{key} again, we see that $(W,\gamma)$ is a self-stabilizing pair for $\Psi(\alpha)$ with respect to $\mathcal{N}$, and
so the first statement follows by Lemma \ref{sat}.
Also, if $\chi\in\Irr_\alpha(G)$, then $\chi=(\gamma\delta)^G$
for some $\pi'$-special character $\delta$ of $W$,
and thus $\chi(1)_\pi=(|G:W|\gamma(1)\delta(1))_\pi=\gamma(1)_\pi=\alpha(1)$.

(5) Suppose that $\alpha$ and $\beta$ are ${\bf N}_G(H)$-conjugate.
Then $\beta=\alpha^g$ for some $g\in G$ and $H^g=H$.
It is easy to see that $W^g$ is the largest subgroup to which $\beta$ extends
and $\gamma^g$ is the unique $\pi$-special extension of $\beta$ to $W^g$.
By definition, we deduce that
$$\Irr_\beta(G)=\{(\gamma^g\delta^g)^G\,|\,\delta^g\in \Y(W^g)\}
=\{(\gamma\delta)^G\,|\,\delta\in \Y(W)\}=\Irr_\alpha(G).$$

Suppose now that there exists some character $\chi\in\Irr_\alpha(G)\cap\Irr_\beta(G)$,
and it suffices to show that $\alpha$ and $\beta$ are ${\bf N}_G(H)$-conjugate.
As we just established, this will imply that $\Irr_\alpha(G)=\Irr_\beta(G)$.
We proceed by induction on the length $n$ of $\NN$ (recall that $N_n=G\in\NN$).
There is nothing to prove if $n=0$ (and hence $G=1$),
so we can assume without loss that the result holds for $N=N_{n-1}$.
(Recall that the corresponding normal $\pi$-series of $N$ is $\NN_{n-1}$ defined in Lemma \ref{hom}.)

As we have seen at the beginning of the proof of Lemma \ref{key},
both $\alpha_{H\cap N}$ and $\gamma_{W\cap N}$ are homogeneous,
and we let $\alpha'$ and $\gamma'$ be the unique irreducible constituents
of $\alpha_{H\cap N}$ and $\gamma_{W\cap N}$, respectively.
Furthermore, we know that $W\cap N$ is the largest subgroup to which $\alpha'$ extends
and that $\gamma'$ is the unique $\pi$-special extension of $\alpha'$ to $W\cap N$.
Now $\chi\in\Irr_\alpha(G)$, so we can write $\chi=(\gamma\delta)^G$ for some $\delta\in\Y(W)$.
Let $\delta'$ be an irreducible constituent of $\delta_{W\cap N}$.
Then $\delta'$ is $\pi'$-special because $W\cap N\NM W$,
and since $\gamma'$ is $\pi$-special,
it follows by Lemma \ref{prod} that $\gamma'\delta'$ is irreducible and lies under $\gamma\delta$.
In particular, $\chi$ lies over $\gamma'\delta'$.
Observer that $(\gamma'\delta')^N\in\Irr(N)$ by (4),
and thus it is an irreducible constituent of $\chi_N$.

Similarly, we can do the same thing for $\chi\in\Irr_\beta(G)$.
Let $V\ge H$ be the largest subgroup of $G$ to which $\beta$ extends,
and let $\nu\in\X(V)$ be the unique $\pi$-special extension of $\beta$ to $V$.
Let $\beta'$ and $\nu'$ be the unique irreducible constituents of $\beta_{H\cap N}$ and $\nu_{V\cap N}$, respectively. Also, we know that $V\cap N$ is the largest subgroup of $N$ to which $\beta'$ extends and that $\nu'$ is the unique $\pi$-special extension of $\beta'$ to $W\cap V$. We can write $\chi=(\nu\mu)^G$ for some $\mu\in\Y(V)$,
and we fix some character $\mu'\in\Y(V\cap N)$ lying under $\mu$.
Then $(\nu'\mu')^N$ is also an irreducible constituent of $\chi_N$.

Now, we have seen that both $(\gamma'\delta')^N$ and $(\nu'\mu')^N$
are irreducible constituents of $\chi_N$, so that they are conjugate in $G$ by Clifford's theorem.
Since $G/N$ is either a $\pi$-group or a $\pi'$-group,
we need to distinguish two possibilities.

Assume first that $G/N$ is a $\pi'$-group.
Then $H\le N$, and hence $G=N\bN_G(H)$ by the Frattini argument.
It follows that $(\gamma'\delta')^N$ and $(\nu'\mu')^N$ can be conjugate by an element of $\bN_G(H)$.
Replacing $\beta$ by an appropriate $\bN_G(H)$-conjugate, we may assume that
$(\gamma'\delta')^N=(\nu'\mu')^N$.
It is easy to see that $\Irr(H,\NN)=\Irr(H,\NN_{n-1})$,
and by the inductive hypothesis applied in $N$,
we deduce that $\alpha$ and $\beta$
are conjugate in $\bN_N(H)$, and thus they are also $\bN_G(H)$-conjugate.
The result follows in this case.

Finally, assume that $G/N$ is a $\pi$-group. Then $NH=G$,
and thus both $W/W\cap N$ and $V/V\cap N$ are $\pi$-groups.
It follows that $\delta_{W\cap N}=\delta'$ and $\mu_{V\cap N}=\mu'$
by Lemma \ref{res}, and in particular, both $\delta'$ and $\mu'$ are $H$-invariant.
Since $\gamma_{W\cap N}$ is a multiple of $\gamma'$
and $\nu_{V\cap N}$ is a multiple of $\nu'$,
we deduce that both $\gamma'$ and $\nu'$ are $W$-invariant.
It follows that both $(\gamma'\delta')^N$ and $(\nu'\mu')^N$
are invariant in $G$, and thus $(\gamma'\delta')^N=(\nu'\mu')^N\in\Irr_{\alpha'}(N)\cap\Irr_{\beta'}(N)$.
By the inductive hypothesis applied in $N$,
we see that $\alpha'$ and $\beta'$ are conjugate in $\bN_N(H\cap N)$,
and hence $\alpha'=(\beta')^x$ for some $x\in \bN_N(H\cap N)$.
Note that the pairs $(W\cap N,\gamma')$
and $(V\cap N, \nu')$ are uniquely determined by $\alpha'$ and $\beta'$, respectively,
and so $(W\cap N,\gamma')=(V\cap N,\nu')^x$.
This implies that $\delta'=(\mu')^x$ by the injection map defined in (4).
Also, since each of $(W,\gamma)$ and $(V,\nu)$ is a self-stabilizing pair for $\NN$
by Lemma \ref{key}, we know that $W$ and $V$ are the stabilizers in $G$ of
the pairs $(W\cap N,\gamma')$ and $(V\cap N,\nu')$, respectively,
and thus $W=V^x$.
Now both $\gamma\delta$ and $\nu^x\mu^x$ are irreducible characters of $W$
that induce $\chi$ and lie over $\gamma'\delta'$,
and since induction defines a bijection
$\Irr(W|\gamma'\delta')\to\Irr(G|(\gamma'\delta')^N)$ (see Lemma 2.12 of \cite{I2018}),
we conclude that $\gamma\delta=\nu^x\mu^x$, which implies that $\gamma=\nu^x$
(see Lemma \ref{prod}).
Observe that $H^x\le V^x=W$, so $H=H^{xw}$ for some $w\in W$,
and we have $\gamma=\gamma^w=\nu^{xw}$.
Recall that $\alpha=\gamma_H$ and $\beta=\nu_H$,
so $\alpha=\beta^{xw}$ with $xw\in\bN_G(H)$,
as required.
The proof is now complete.
\end{proof}

As an application of Theorem \ref{C'}, we prove the following,
which lies at heart of the proof of Theorem \ref{A}.

\begin{thm}\label{A'}
Let $G$ be a $\pi$-separable group with Hall $\pi$-subgroup $H$,
and let $\NN$ be a normal $\pi$-series of $G$.
Then $\chi\in\Irr(G)$ is $(\NN,\pi)$-stable if and only if $\chi\in\Irr_\alpha(G)$
for some $\alpha\in\Irr(H,\NN)$, and thus $\Irr(G,\NN)=\bigcup\Irr_\alpha(G)$,
where $\alpha$ runs over $\Irr(H,\NN)$.
\end{thm}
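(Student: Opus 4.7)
The plan is to prove both containments of $\Irr(G,\NN) = \bigcup_\alpha \Irr_\alpha(G)$ by induction on the length $n$ of $\NN$; the base $n\le 1$ is trivial since the stability condition is vacuous. For the inductive step set $N = N_{n-1}$, so that $\NN_{n-1}$ is a normal $\pi$-series of $N$ and $G/N$ is either a $\pi$-group or a $\pi'$-group.

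For the inclusion $\Irr_\alpha(G) \subseteq \Irr(G,\NN)$, fix $\chi = (\gamma\delta)^G$ with $(W,\gamma)$ the self-stabilizing pair furnished by Lemma \ref{key} and $\delta \in \Y(W)$. Let $\gamma'$ be the unique irreducible constituent of $\gamma_{W\cap N}$; then $(W\cap N,\gamma')$ is a self-stabilizing pair for $\NN_{n-1}$ in $N$, and any irreducible constituent $\delta'$ of $\delta_{W\cap N}$ is $\pi'$-special by Lemma \ref{normal}. Since $\gamma_{W\cap N}$ is a multiple of $\gamma'$, a Mackey computation on $\chi_N$ shows that every irreducible constituent of $\chi_N$ is $G$-conjugate to $(\gamma'\delta'')^N$ for some irreducible constituent $\delta''$ of $\delta_{W\cap N}$, and by Theorem \ref{C'}(4) applied in $N$ each such constituent lies in $\Irr_{\alpha_{n-1}}(N)\subseteq \Irr(N,\NN_{n-1})$ by the inductive hypothesis; since $(\NN_{n-1},\pi)$-stability is preserved under $G$-conjugation, this yields the $(\NN,\pi)$-stability of $\chi$ at all steps below the top. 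For the top step, if $G/N$ is $\pi'$ the condition is vacuous; if $G/N$ is $\pi$ then $G = WN$ and Lemma \ref{res} (applied to the complementary set of primes) forces $\delta_{W\cap N}=\delta'$ to be irreducible, so $(\gamma'\delta')^N$ is $W$-invariant (since $W$ fixes $\gamma'$ by the self-stabilizing property and fixes $\delta'$ trivially as a restriction from $W$) and hence $G$-invariant.

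For the reverse inclusion, fix $\chi \in \Irr(G,\NN)$ and an irreducible constituent $\theta$ of $\chi_N$. Transitivity of restriction makes $\theta$ itself $(\NN_{n-1},\pi)$-stable, so the inductive hypothesis gives $\theta = (\gamma'\delta')^N \in \Irr_{\alpha'}(N)$ for some $\alpha' \in \Irr(H\cap N, \NN_{n-1})$, with $(W',\gamma')$ the associated self-stabilizing pair in $N$. I lift this data to $G$ by cases. When $G/N$ is a $\pi'$-group, $H \subseteq N$ and I set $\alpha = \alpha' \in \Irr(H,\NN)$; the corresponding $W$ in $G$ satisfies $W \cap N = W'$ with $|W:W'|$ a $\pi'$-number, $\gamma$ is the unique $\pi$-special extension of $\gamma'$ to $W$ by Lemma \ref{normal}(3), and the Clifford correspondent of $\chi$ at the stabilizer of $\theta$, combined with the uniqueness of the $\pi/\pi'$-factorization (Lemma \ref{prod}) and the injection of Lemma \ref{sat}, identifies a unique $\delta \in \Y(W)$ with $\chi = (\gamma\delta)^G$. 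When $G/N$ is a $\pi$-group, $\theta$ is $G$-invariant by the stability of $\chi$, so $\alpha'$ and $(W',\gamma')$ are $H$-invariant by their uniqueness; following the construction in the proof of Lemma \ref{key}, I extend the data to obtain $\alpha \in \Irr(H,\NN)$ together with $(W, \gamma, \delta)$ such that $W = W'H$ and $\chi = (\gamma\delta)^G$.

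The principal obstacle I anticipate is the $\pi'$-case of the reverse direction, where $\theta$ need not be $G$-invariant and one must carefully identify the correct $\pi'$-special factor $\delta \in \Y(W)$ lying over $\delta'$ by tracking the Clifford correspondent of $\chi$ at the stabilizer of $\theta$. The remaining cases are tightly controlled by the uniqueness statements in Lemmas \ref{key} and \ref{prod}, leaving essentially no choice.
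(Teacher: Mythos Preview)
Your forward direction is sound and, though organised inductively rather than level-by-level as in the paper, it reaches the same conclusion by the same mechanism (restriction of $\gamma$ is homogeneous, restriction of $\delta$ stays $\pi'$-special, and when the top factor is a $\pi$-group the $\pi'$-special restriction is already irreducible). The $\pi'$-case of the reverse direction is along the right lines but your phrasing ``Clifford correspondent of $\chi$ at the stabiliser of $\theta$'' is not quite what is needed: the stabiliser of $\theta=(\gamma'\delta')^N$ is in general strictly smaller than $W=\bN_G(W',\gamma')$, since $W$ need not fix $\delta'$. What actually works, and what the paper does, is to pick any $\tau\in\Irr(W)$ lying between $\gamma'\delta'$ and $\chi$, observe that $(\gamma'\delta')^W=\gamma\cdot(\delta')^W$ forces $\tau=\gamma\delta$ with $\delta\in\Y(W)$, and then invoke Lemma~\ref{sat} to get $\chi=\tau^G$.

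The genuine gap is in the $\pi$-case of the reverse direction. From the $G$-invariance of $\theta$ you conclude that ``$\alpha'$ and $(W',\gamma')$ are $H$-invariant by their uniqueness'', and then set $W=W'H$. This inference fails: $\theta\in\Irr_{\alpha'}(N)$ determines $\alpha'$ only up to $\bN_N(H\cap N)$-conjugacy (Theorem~\ref{C'}(5)), so for $h\in H$ one gets merely that $(\alpha')^h$ is $\bN_N(H\cap N)$-conjugate to $\alpha'$, not equal to it. Consequently $H$ need not normalise $W'$, and $W'H$ need not even be a subgroup. The paper's route around this is instructive: one first shows the weaker statement that $\Psi(\alpha')=(\gamma')^N$ is $G$-invariant (this \emph{does} follow from Theorem~\ref{C'}(3) applied in $N$), then uses the Frattini argument on the $N$-conjugacy class of self-stabilising pairs for $(\gamma')^N$ to obtain $G=N\cdot\bN_G(W',\gamma')$; setting $W=\bN_G(W',\gamma')$ one then shows $\delta'$ is $W$-invariant, extends it $\pi'$-specially, and factors an intermediate character above $\gamma'\delta'$. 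Even then a final conjugation is needed because the Hall $\pi$-subgroup of $W$ containing $H\cap N$ may not be $H$ itself. Your sketch collapses all of this into an unjustified invariance claim; filling it in requires essentially the argument just described.
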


\begin{proof}
Suppose first that $\chi\in\Irr_\alpha(G)$ for some $\alpha\in\Irr(H,\NN)$.
Then by definition, we can write $\chi=(\gamma\delta)^G$ with $\gamma\in\X(W)$ and $\delta\in\Y(W)$, where $W$ is the unique maximal subgroup of $G$ to which $\alpha$ extends and $\gamma$ is the unique $\pi$-special extension of $\alpha$ to $W$.
By Lemma \ref{key}, we see that $(W,\gamma)$ is a self-stabilizing pair of $\gamma^G$ for $\NN$, and since $|G:W|$ is a $\pi'$-number, we have $\gamma^G\in\BP(G:\NN)$.
Furthermore, let $W_i=W\cap N_i$ for each $i$.
By Definition \ref{ssp}, we know that $\gamma_{W_i}$ has a unique irreducible constituent $\gamma_i$ and that $(W_i,\gamma_i)$ is a self-stabilizing pair for $\NN_i$,
where $\NN_i$ is defined in Lemma \ref{hom}.
It follows by Lemma \ref{IS} that $\psi_i=(\gamma_i)^{N_i}$ is irreducible,
and observe that $|N_i:W_i|=|N_iW:W|$ divides $|G:W|$, which is a $\pi'$-number,
and thus $\psi_i\in\BP(N_i,\NN_i)$.
Furthermore, for each $i$ we can choose some irreducible constituents $\delta_i$ of $\delta_{W_i}$ such that $\delta_i$ lies under $\delta_{i+1}$.
Then we have $(W_i,\delta_i)\le (W_n,\delta_n)=(W,\delta)$.
Since $\delta$ is $\pi'$-special and $W_i\NM W$, it follows that each $\delta_i$ is also $\pi'$-special.

Now we consider the case where $N_{j+1}/N_j$ is a $\pi$-group for some $j$.
Then $N_{j+1}=N_jW_{j+1}$ because $|N_{j+1}:W_{j+1}|$ is a $\pi'$-number.
Since $\gamma_j$ is $W_{j+1}$-invariant and induces $\psi_j$,
it follows that $\psi_j$ is invariant in $N_{j+1}$.
On the other hand, note that $|W_{j+1}:W_j|=|N_{j+1}:N_j|$, which is a $\pi$-number,
and thus $\delta_{j+1}$ restricts irreducibly to $W_j$. This implies that $\delta_{j+1}$ is an extension of $\delta_j$,
and hence $\delta_j$ is $W_{j+1}$-invariant.
Writing $\theta_j=(\gamma_j\delta_j)^{N_j}$, we deduce that $\theta_j$ is fixed by $N_{j+1}$.
By Lemma \ref{sat}, we see that $\theta_j$ is irreducible, and
since $\gamma_j\delta_j$ lies under $\gamma\delta$, it follows that $\theta_j$ lies under $\chi$.
This shows that $\chi$ is $(\NN,\pi)$-stable, as wanted.

Conversely, suppose that $\chi$ is $(\NN,\pi)$-stable.
By definition, there exist characters $\theta_i\in\Irr(N_i)$ for $i=0,1,\ldots,n$, with
$$(N_0,\theta_0)\le (N_1,\theta_1)\le \cdots \le (N_n,\theta_n)=(G,\chi),$$
and such that each $\theta_i$ is invariant in $N_{i+1}$ whenever $N_{i+1}/N_i$ is a $\pi$-group.
Since $\NN_i$ is a normal $\pi$-series of $N_i$ for all $i$, we see that $\theta_i$ is $(\NN_i,\pi)$-stable.
By induction on $n$, we may assume without loss that $\theta_{n-1}\in\Irr_\beta(N_{n-1})$
for some $\beta\in\Irr(H\cap N_{n-1},\NN_{n-1})$.
For notational convenience, we write $N=N_{n-1}$, $\theta=\theta_{n-1}$ and $\NN'=\NN_{n-1}$. Then $\beta\in\Irr(H\cap N,\NN')$,
and we want to show that there exists some $\alpha\in\Irr(H,\NN)$ such that $\chi\in\Irr_\alpha(G)$.

By definition, we can write $\theta=(\nu\mu)^N$ with $\nu\in\X(V)$ and $\mu\in\Y(V)$,
where $V$ is the unique maximal subgroup of $N$ to which $\beta$ extends and $\nu$ is the unique $\pi$-special extension of $\beta$ to $V$.
By Lemma \ref{key} again, we see that $(V,\nu)$ is a self-stabilizing pair of $\nu^N$ for $\NN'$,
and since $|N:V|$ is a $\pi'$-number, we have $\nu^N\in\BP(N:\NN')$.
Let $W={\bf N}_G(V,\nu)$ be the stabilizer of $(V,\nu)$ in $G$.
Then $W\cap N=V$ because $\nu^N$ is irreducible.
Since $G/N$ is either a $\pi$-group or a $\pi'$-group, we have two possibilities.

Assume first that $G/N$ is a $\pi'$-group, so that $H=H\cap N$ is a Hall $\pi$-subgroup of $N$,
and thus $|G:V|$ is a $\pi'$-number. In particular, $W/V$ is also a $\pi'$-group,
and hence $\nu$ can be extended to $W$. By Corollary 3.16 of \cite{I2018},
we see that $\nu$ has a unique $\pi$-special extension $\hat\nu$ to $W$,
and thus by definition $(W,\hat\nu)$ is a self-stabilizing pair for $\NN$.
Writing $\psi=\hat\nu^G$, we have $\psi\in\BP(G:\NN)$.
Furthermore, note that $\chi$ lies over $\theta$ and hence over $\nu\mu$,
so there exists $\tau\in\Irr(W)$ lying under $\chi$ and over $\nu\mu$.
But $(\nu\mu)^W=\hat\nu \mu^W$ and each irreducible constituent of $\mu^W$
is $\pi'$-special, and thus $\tau=\hat\nu\delta$, where $\delta$ is a $\pi'$-special character of $W$. In this case, we know that $\tau^G$ is irreducible by Lemma \ref{sat},
which forces $\chi=(\hat\nu\delta)^G$, and since $H$ is a Hall $\pi$-subgroup of $N$
and $\hat\nu_H=\nu_H=\beta$, it is easy to see that $W$ is exactly the unique maximal subgroup of $G$ to which $\beta$ can be extended (see Lemma \ref{ext}).
Hence $\chi\in\Irr_\beta(G)$
with $\beta\in\Irr(H\cap N,\NN')=\Irr(H,\NN)$.

Next, we suppose that $G/N$ is a $\pi$-group. Then $G=NH$,
and the $(\NN,\pi)$-stability of $\chi$ implies that $\theta$ is invariant in $G$.
We claim that $\nu^N$ is also invariant in $G$.
To see this, fix $h\in H$, and observe that $\theta^h=\theta$
and $\beta^h\in\Irr(H\cap N,\NN')$.
Also, $V^h$ is the unique maximal subgroup of $N$ to which $\beta^h$ extends and
$\nu^h$ is the unique $\pi$-special extension of $\beta^h$ to $V^h$.
By Theorem \ref{C'}(4), with $N$, $\NN'$ and $H\cap N$ in place of $G$,
$\NN$ and $H$, respectively,
we see that $\theta\in\Irr_\beta(N)\cap \Irr_{\beta^h}(N)$.
It follows by Theorem \ref{C'}(5) that $\beta$ and $\beta^h$ are conjugate in ${\bf N}_N(H\cap N)$, and by Theorem \ref{C'}(3), we deduce that $\nu^N=(\nu^h)^N=(\nu^N)^h$.
This shows that $\nu^N$ is $H$-invariant, and hence is invariant in $G$,
as claimed.

Furthermore, since all of the self-stabilizing pairs of $\nu^N$ for $\NN'$ are conjugate in $N$,
we have $G=NW$ by the Frattini argument, and thus $|G:W|=|N:(W\cap N)|=|N:V|$,
which is a $\pi'$-number. Also, since $|W:V|$ divides $|G:N|$,
we see that $W/V$ is a $\pi$-group.
By Lemma \ref{sat} again, we see that $\mu$ is invariant in $W$ because both $\theta$ and $(V,\nu)$
are fixed by $W$, and thus $\mu$ has a unique $\pi'$-special extension $\hat\mu\in\Y(W)$.
Reasoning as before, since $\chi$ lies over $\nu\mu$,
there exists some $\rho\in\Irr(W)$ lying under $\chi$ and over $\nu\mu$,
and observe that $(\nu\mu)^W=\nu^W\hat\mu$ and that each irreducible constituent of $\nu^W$ is $\pi$-special,
so we have $\rho=\gamma\hat\mu$, where $\gamma\in\X(W)$ lies over $\nu$.
Now $(W,\gamma)$ is a self-stabilizing pair for $\NN$ by definition,
and since $|G:W|$ is a $\pi'$-number, we have $\gamma^G\in\BP(G:\NN)$.
This implies that $\chi=\rho^G=(\gamma\hat\mu)^G$.
Furthermore, let $J$ be a Hall $\pi$-subgroup of $W$ containing $H\cap N$ and let $\alpha'=\gamma_J$.
Then $\alpha'\in\Irr(J,\NN)$ and $J$ is also a Hall $\pi$-subgroup of $G$,
and hence $J\cap N=H\cap N$.
Also, since $\gamma$ is a $\pi$-special extension of $\alpha'$ to $W$ and induces irreducibly to $G$,
we see that $W$ is the unique maximal subgroup of $G$ to which $\alpha'$
can be extended, and by definition we have $\chi\in\Irr_{\alpha'}(G)$.
Now $J$ and $H$ are conjugate in $G$, so $H=J^g$ for some $g\in G$.
Write $\alpha=(\alpha')^g$. Then $(J,\alpha')^g=(H,\alpha)$,
and it follows that $\alpha\in\Irr(H,\NN)$.
This shows that $\chi\in\Irr_\alpha(G)$, and the proof is now complete.
\end{proof}

\begin{cor}\label{D}
Let $H$ be a Hall $\pi$-subgroup of a $\pi$-separable group $G$,
and suppose that $\NN$ is a normal $\pi$-series of $G$.
Then the following hold.

{\rm (1)} $|\Irr_\alpha(G)|=|\Irr({\bf N}_G(H)|\alpha)|$ for each $\alpha\in\Irr(H,\NN)$.

{\rm (2)} $\Irr({\bf N}_G(H),\NN)=\bigcup\Irr({\bf N}_G(H)|\alpha)$,
where $\alpha$ runs over $\Irr(H,\NN)$.
\end{cor}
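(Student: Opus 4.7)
The plan is to deduce (2) directly from Theorem \ref{A'} applied to $\bN_G(H)$, and then to derive (1) by combining Theorem \ref{C'}(4) with Wolf's theorem, reducing matters to identifying two subgroups containing $H$.

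For (2), first apply Theorem \ref{A'} to the $\pi$-separable group $\bN_G(H)$, with Hall $\pi$-subgroup $H$ and the restricted normal $\pi$-series $\{\bN_G(H)\cap N_i\}$; the identity $H\cap(\bN_G(H)\cap N_i)=H\cap N_i$ ensures that $\Irr(H,\NN)$ is unchanged in the passage to $\bN_G(H)$. This gives $\Irr(\bN_G(H),\NN)=\bigcup_\alpha \Irr_\alpha(\bN_G(H))$, and it remains to show $\Irr_\alpha(\bN_G(H))=\Irr(\bN_G(H)|\alpha)$. The key structural simplification is that $H$ is a normal Hall $\pi$-subgroup of $\bN_G(H)$, so $\bN_G(H)/H$ is a $\pi'$-group. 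Letting $T$ be the stabilizer of $\alpha$ in $\bN_G(H)$, Lemma \ref{normal}(3) provides a unique $\pi$-special extension $\tilde\alpha$ of $\alpha$ to $T$, and no larger subgroup can extend $\alpha$ (any such subgroup must fix $\alpha$), so $T$ plays the role of $W$ in Theorem \ref{C'} for $\bN_G(H)$. Since $H$ is a $\pi$-group and $T/H$ is a $\pi'$-group, Lemma \ref{normal}(1),(2) together yield $\Y(T)=\Irr(T/H)$; Gallagher's theorem and the Clifford correspondence then identify $\Irr_\alpha(\bN_G(H)) = \{(\tilde\alpha\tau)^{\bN_G(H)} : \tau\in\Irr(T/H)\}$ with $\Irr(\bN_G(H)|\alpha)$, proving (2).

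For (1), Theorem \ref{C'}(4) gives $|\Irr_\alpha(G)|=|\Y(W)|$, where $W$ is the unique largest subgroup of $G$ extending $\alpha$ and $\gamma$ is its $\pi$-special extension. Wolf's theorem (Lemma \ref{Wolf}, in its $\pi'$-special form, valid since $H$ is a Hall $\pi$-subgroup of $W$) evaluates this as $|\Irr(\bN_W(H)/H)|$. On the other hand, the Clifford--Gallagher calculation from (2) gives $|\Irr(\bN_G(H)|\alpha)|=|\Irr(T/H)|$, where $T$ is the stabilizer of $\alpha$ in $\bN_G(H)$. So (1) reduces to the identity $T=\bN_W(H)$.

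The main obstacle is establishing $T\leq W$; the reverse inclusion $\bN_W(H)\leq T$ is immediate, since elements of $W$ fix $\gamma$ and hence $\alpha=\gamma_H$. To handle $T\leq W$, given $g\in T$, I would first use the uniqueness of $(W,\gamma)$ to conclude that $W^g=W$ and $\gamma^g=\gamma$: the subgroup $W^g$ contains $H^g=H$ and extends $\alpha^g=\alpha$ via $\gamma^g$, so maximality of $W$ forces $W^g\leq W$, and then order considerations together with the uniqueness of the $\pi$-special extension do the rest. To then pull $g$ inside $W$, consider $\langle W,g\rangle$: since $g\in\bN_G(H)$ and $\bN_G(H)/H$ is a $\pi'$-group, the $\pi$-part of $g$ lies in $H\leq W$, so $\langle W,g\rangle/W$ is cyclic of $\pi'$-order. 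Lemma \ref{normal}(3) extends $\gamma$ to a $\pi$-special character of $\langle W,g\rangle$, producing an extension of $\alpha$ beyond $W$; maximality of $W$ then forces $g\in W$. Hence $T=\bN_W(H)$, completing (1).
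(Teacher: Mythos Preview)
Your argument is correct and, for part (1), follows exactly the paper's route: both compute $|\Irr_\alpha(G)|=|\Y(W)|$ via Theorem~\ref{C'}(4), apply Wolf's theorem to pass to $|\Irr(\bN_W(H)/H)|$, and identify this with $|\Irr(\bN_G(H)\mid\alpha)|$ through Clifford--Gallagher. The paper simply asserts that $\bN_W(H)$ is the inertia group of $\alpha$ in $\bN_G(H)$ (``it is easy to see''), whereas you supply a careful proof of $T=\bN_W(H)$; your extension argument via $\langle W,g\rangle$ is a nice way to make this explicit.

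For part (2) the routes diverge: you invoke Theorem~\ref{A'} for $\bN_G(H)$ and then identify $\Irr_\alpha(\bN_G(H))$ with $\Irr(\bN_G(H)\mid\alpha)$, which is correct but brings in the full self-stabilizing machinery. The paper instead argues in two lines directly from the definition: since $H\NM\bN_G(H)$, a character $\chi\in\Irr(\bN_G(H))$ lies in $\Irr(\bN_G(H),\NN)$ if and only if the constituent $\alpha$ of $\chi_H$ satisfies $\alpha_{H\cap N_i}$ homogeneous for all $i$, i.e.\ $\alpha\in\Irr(H,\NN)$. This is more elementary and avoids the detour through Theorem~\ref{A'}, though your approach has the advantage of exhibiting $\Irr(\bN_G(H)\mid\alpha)$ explicitly as $\Irr_\alpha(\bN_G(H))$, which is useful later in Theorem~\ref{B'}.
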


\begin{proof}
(1) Fix $\alpha\in\Irr(H,\NN)$, and as before,
let $W$ be the unique largest subgroup of $G$ to which $\alpha$ extends
and let $\gamma$ be the unique $\pi$-special extension of $\alpha$ to $W$.
Then it is easy to see that $\bN_W(H)$ is also the unique largest subgroup of $\bN_G(H)$ to which $\alpha$ extends and that $\gamma'$ is also the unique $\pi$-special extension of $\alpha$ to $\bN_W(H)$, where $\gamma'$ is the restriction of $\gamma$ to $\bN_W(H)$.
Since $H$ is a normal Hall $\pi$-subgroup of $\bN_G(H)$, it follows that $\bN_W(H)$ is exactly the inertial group of $\alpha$ in $\bN_G(H)$.
By the Clifford correspondence, together with Gallagher's theorem (see Corollary 6.17 of \cite{I1976}), we conclude that
the map $\mu\mapsto(\gamma'\mu)^{{\bf N}_G(H)}$ is a bijection
from $\Irr(\bN_W(H)/H)$ onto $\Irr({\bf N}_G(H)|\alpha)$.
Furthermore, observe that $\bN_W(H)/H$ is a $\pi'$-group and
that $H$ is contained in the kernel of every $\pi'$-special character of $\bN_W(H)$.
Thus $\Irr(\bN_W(H)/H)=\Y(\bN_W(H))$,
and so we obtain a bijection from $\Y(\bN_W(H))$ onto $\Irr({\bf N}_G(H)|\alpha)$.
In particular, we have $|\Y(\bN_W(H))|=|\Irr({\bf N}_G(H)|\alpha)|$.

On the other hand, we obtain $|\Irr_\alpha(G)|=|\Y(W)|$ by Theorem \ref{C'}(4).
Also, by Lemma \ref{Wolf} (with the roles of $\pi$ and $\pi'$ interchanged),
we know that $|\Y(W)|=|\Y(\bN_W(H))|$,
and thus $|\Irr_\alpha(G)|=|\Irr({\bf N}_G(H)|\alpha)|$, as required.

(2) Let $\chi\in\Irr({\bf N}_G(H))$, and let $\alpha\in\Irr(H)$
be a constituent of $\chi_H$.
By definition, it is easy to see that $\chi\in\Irr({\bf N}_G(H),\NN)$
if and only if $\alpha\in \Irr(H,\NN)$,
and the result follows. This completes the proof.
\end{proof}

Now Theorem \ref{C} follows by Theorems \ref{C'} and \ref{A'} and Corollary \ref{D}
(with $p$ in place of $\pi$).

\medskip\noindent\emph{Proof of Theorem \ref{A}.}
By definition, we have $\Irr_{p'}(G)\subseteq \Irr_0(G,\NN)$,
and the reverse containment is clear. Thus (1) follows.
For each integer $i\ge 0$, let $\Delta_i$ be a set of representatives
for the orbits of the conjugation action of ${\bf N}_G(P)$ on $\Irr_i(P,\NN)$.
By Theorem \ref{A'} and Theorem \ref{C'}(5),
we can write
$$\Irr_i(G,\NN)=\bigsqcup_{\alpha\in\Delta_i}\Irr_\alpha(G),$$
and by Corollary \ref{D}, we deduce that
$$\Irr_i({\bf N}_G(P),\NN)=\bigsqcup_{\alpha\in\Delta_i}\Irr({\bf N}_G(P)|\alpha)$$
and that $|\Irr_\alpha(G)|=|\Irr({\bf N}_G(P)|\alpha)|$
for each $\alpha\in\Irr(P,\NN)$.
Thus
$$|\Irr_i(G,\NN)|=|\Irr_i({\bf N}_G(P),\NN)|,$$
and (2) holds. In particular, we obtain
$$|\Irr(G,\NN)|=\sum_{i\ge 0}|\Irr_i(G,\NN)|
=\sum_{i\ge 0}|\Irr_i({\bf N}_G(P),\NN)|=|\Irr({\bf N}_G(P),\NN)|,$$
and (4) follows.

Now we fix an integer $i\ge 0$. For each $\chi\in\Irr(G,\NN)$,
we have seen that $\chi\in\Irr_i(G,\NN)$ if and only if $\chi\in\Irr_\alpha(G)$
for some character $\alpha\in\Irr_i(P,\NN)$,
which is equivalent to saying that
$\chi(1)_p=\alpha(1)=p^i$ by Theorem \ref{C'}(4).
This proves (3), and the proof is complete.
\qed

To show Theorem \ref{B} in the introduction, we need two preliminary results.

\begin{lem}\label{p'-cor}
Let $G$ be a solvable group of odd order, and let $H$ be a Hall $\pi$-subgroup of $G$.
Then there is a canonical bijection from $\Irr_{\pi'}(G)$ onto $\Irr_{\pi'}({\bf N}_G(H))$.
Also, if $\xi$ is the image of $\chi$ under this map, then the following hold.

{\rm (1)} $\xi$ is a constituent of $\chi_{{\bf N}_G(H)}$.

{\rm (2)} $\xi(1)$ divides $\chi(1)$.

{\rm (3)} $\chi$ is $\pi'$-special if and only if $\xi$ is $\pi'$-special.
\end{lem}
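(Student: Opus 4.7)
The plan is to decompose both $\Irr_{\pi'}(G)$ and $\Irr_{\pi'}(\bN_G(H))$ along linear characters of $H$ using the machinery of Section 3, and then to reduce the problem to a single canonical bijection between $\pi'$-special characters, which is where the odd-order hypothesis enters. Fix any normal $\pi$-series $\NN$ of $G$, for instance $\NN=\{1,G\}$. Since linear characters of $H$ are automatically homogeneous on restriction, $\Irr_0(H,\NN)=\Lin(H)$, and of course $\Irr_0(G,\NN)=\Irr_{\pi'}(G)$. Theorem \ref{A'}, Theorem \ref{C'}(5) and Corollary \ref{D} then give the parallel disjoint decompositions
\begin{align*}
\Irr_{\pi'}(G) &= \bigsqcup_{\lambda} \Irr_\lambda(G), \\
\Irr_{\pi'}(\bN_G(H)) &= \bigsqcup_{\lambda} \Irr(\bN_G(H)\mid\lambda),
\end{align*}
where $\lambda$ runs over a set of $\bN_G(H)$-orbit representatives on $\Lin(H)$. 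It is enough to construct a canonical bijection piece by piece.

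Fix $\lambda$ and let $(W,\gamma)$ be the associated pair from Theorem \ref{C'}; since $\lambda$ is linear, so is $\gamma$. Theorem \ref{C'}(4) makes $\delta\mapsto(\gamma\delta)^G$ a bijection $\Y(W)\to\Irr_\lambda(G)$. On the other side, $H$ is the unique Hall $\pi$-subgroup of $\bN_G(H)$, so the stabilizer of $\lambda$ in $\bN_G(H)$ is $\bN_W(H)$; the Clifford correspondence together with Gallagher's theorem applied to the linear extension $\gamma_{\bN_W(H)}$ identifies $\Irr(\bN_G(H)\mid\lambda)$ with $\Irr(\bN_W(H)/H)$, which equals $\Y(\bN_W(H))$ by Lemma \ref{normal}(2) since $\bN_W(H)/H$ is a $\pi'$-group.

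What remains is to produce a canonical bijection $\delta\mapsto\delta^*$ from $\Y(W)$ onto $\Y(\bN_W(H))$ such that $\delta^*$ is a constituent of $\delta_{\bN_W(H)}$ and $\delta^*(1)$ divides $\delta(1)$. This is the Hall-subgroup refinement of Theorem 10.9 of \cite{I1973}; I plan to establish it by induction on $|W|$ along a chief series, invoking the Glauberman correspondence at each coprime step (available throughout because $|W|$ is odd). Composing the three bijections yields the map
\begin{equation*}
\chi=(\gamma\delta)^G\;\longmapsto\;\xi=\bigl(\gamma_{\bN_W(H)}\,\delta^*\bigr)^{\bN_G(H)}.
\end{equation*}
Property (1) follows from Mackey's formula applied to the identity double coset of $\bN_G(H)\backslash G/W$, which contributes $\bigl((\gamma\delta)_{\bN_W(H)}\bigr)^{\bN_G(H)}$ to $\chi_{\bN_G(H)}$; since $\delta^*$ lies under $\delta_{\bN_W(H)}$, the character $\gamma_{\bN_W(H)}\delta^*$ lies under $(\gamma\delta)_{\bN_W(H)}$, so $\xi$ is a constituent of $\chi_{\bN_G(H)}$. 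For (2), $\xi(1)=[\bN_G(H):\bN_W(H)]\,\delta^*(1)$ and $\chi(1)=[G:W]\,\delta(1)$, and the identity $[G:W]=[G:\bN_G(H)W]\cdot[\bN_G(H):\bN_W(H)]$ gives the index divisibility, which together with $\delta^*(1)\mid\delta(1)$ forces $\xi(1)\mid\chi(1)$. Property (3) is forced by the uniqueness of the $\pi\pi'$-factorization in Lemma \ref{prod}: $\chi$ is $\pi'$-special exactly when its $\pi$-special factor $\gamma^G$ is trivial, i.e.\ when $\lambda=1_H$ and $W=G$, and the analogous condition on the $\bN_G(H)$ side characterizes when $\xi$ is $\pi'$-special.

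The principal obstacle is the canonical bijection of the third paragraph. While the inductive strategy and Glauberman correspondence supply the underlying map, the delicate point is verifying that the bijection so obtained genuinely commutes with $\bN_G(H)$-conjugation, with restriction to normal subgroups, and with induction through successive layers of the $\pi$-series---precisely the compatibilities required to stitch the local bijections together into a global canonical map and to guarantee that properties (1) and (2) descend from $\delta\mapsto\delta^*$ to $\chi\mapsto\xi$.
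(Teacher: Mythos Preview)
The paper's proof of this lemma is a two-line citation: parts (1) and (2) are Theorem~8.12 of \cite{I2018}, and part (3) is Corollary~3.3 of \cite{N1993}. Your proposal instead rebuilds the bijection from scratch using the paper's own Section~3 machinery, which is a very different route. In fact what you outline is essentially the argument the paper gives later for Theorem~\ref{B'} (the general $\Irr(G,\NN)\to\Irr(\bN_G(H),\NN)$ bijection), specialized to $i=0$; the paper runs that argument \emph{after} Lemma~\ref{p'-cor} is in hand and uses the lemma as the black box supplying the $\delta\mapsto\delta^*$ step. So your ``principal obstacle'' --- the canonical bijection $\Y(W)\to\Y(\bN_W(H))$ with the constituent and divisibility properties --- is exactly the content of the results being cited, and you are proposing to re-derive them rather than reduce to something simpler.

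A few concrete issues with the proposal as written. First, $\NN=\{1,G\}$ is not a normal $\pi$-series unless $G$ itself is a $\pi$-group or a $\pi'$-group; you must take a genuine series (this is harmless, but the example is wrong). Second, your argument for (3) does not go through: $\chi=(\gamma\delta)^G$ is an \emph{induced} character, not a product of a $\pi$-special and a $\pi'$-special character of $G$, so Lemma~\ref{prod} does not apply to it, and $\gamma^G$ is typically not $\pi$-special. One needs a separate argument (as in \cite{N1993}) that the Isaacs correspondence preserves $\pi'$-speciality. Third, the inductive Glauberman construction you sketch for $\delta\mapsto\delta^*$ is precisely the hard analytic content of the cited theorems; carrying it out with all the required naturality checks is not a reduction but a reproof. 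The cleaner path is simply to invoke the literature, as the paper does.
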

\begin{proof}
Parts (1)-(2) follow by Theorem 8.12 of \cite{I2018},
and (3) follows by Corollary 3.3 of \cite{N1993}.
\end{proof}

\begin{lem}\label{T}
Let $\pi$ be a set of primes, let $G$ be a $\pi$-separable group, and
let $K$ be a subgroup of $G$. Then $\nu_\pi(K)$ divides $\nu_\pi(G)$,
where $\nu_\pi(X)$ denotes the number of Hall $\pi$-subgroups of a group $X$.
\end{lem}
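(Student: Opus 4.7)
The plan is to argue by induction on $|G|$, taking a minimal normal subgroup $N$ of $G$ and splitting on whether $N$ is a $\pi$-group or a $\pi'$-group (one of the two by $\pi$-separability).

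If $N$ is a $\pi$-group, then $N$ lies in every Hall $\pi$-subgroup of $G$, so the Hall $\pi$-subgroups of $G$ correspond bijectively with those of $G/N$ via $H\mapsto H/N$, yielding $\nu_\pi(G)=\nu_\pi(G/N)$; similarly $K\cap N$ is a normal $\pi$-subgroup of $K$, and $\nu_\pi(K)=\nu_\pi(K/(K\cap N))=\nu_\pi(KN/N)$. The inductive hypothesis applied in $G/N$ to the subgroup $KN/N$ then gives $\nu_\pi(K)\mid\nu_\pi(G)$.

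The substantive case is when $N$ is a $\pi'$-group, and I would factor the divisibility as $\nu_\pi(K)\mid\nu_\pi(KN)\mid\nu_\pi(G)$. For the first divisibility, observe that $|KN:K|$ divides $|N|$ and is therefore a $\pi'$-number; hence any Hall $\pi$-subgroup $L$ of $K$ is also a Hall $\pi$-subgroup of $KN$, and from $\bN_K(L)=K\cap \bN_{KN}(L)$ the second isomorphism theorem embeds $K/\bN_K(L)$ into $KN/\bN_{KN}(L)$, giving $\nu_\pi(K)\mid\nu_\pi(KN)$. For the second divisibility, I would exploit that $N$ is a normal $\pi'$-subgroup of both $KN$ and $G$: for any such group $X$ containing $N$ normally, the map $H\mapsto HN/N$ from Hall $\pi$-subgroups of $X$ onto those of $X/N$ has fibers equal to $N$-conjugacy classes (by Schur--Zassenhaus applied inside $HN$), and transitivity of $X$ on its Hall $\pi$-subgroups forces all fibers to share a common size $c_X=|N:\bN_N(H_X)|$, so $\nu_\pi(X)=\nu_\pi(X/N)\cdot c_X$. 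By induction in $G/N$ we have $\nu_\pi(KN/N)\mid\nu_\pi(G/N)$; and choosing $H_{KN}=H_G\cap KN$ (forced to be a Hall $\pi$-subgroup of $KN$ since $|KN|_\pi=|K|_\pi=|L|$), we obtain $\bN_N(H_G)\le \bN_N(H_{KN})$, so Lagrange yields $c_{KN}\mid c_G$ and multiplying gives $\nu_\pi(KN)\mid\nu_\pi(G)$.

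The main obstacle I expect is this second divisibility in the $\pi'$ case; the key technical point is that the uniformity of the fiber sizes $c_{KN}$ and $c_G$ comes from transitivity of the ambient group on its Hall $\pi$-subgroups, which is what allows the compatible choice $H_{KN}\le H_G$ to convert the normalizer inclusion $\bN_N(H_G)\le \bN_N(H_{KN})$ into a genuine divisibility of indices rather than merely an inequality.
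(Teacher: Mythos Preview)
The paper does not prove this lemma; it simply cites Turull's result (Corollary~1.2 of \cite{T2004}). Your proposal therefore goes well beyond what the paper does, supplying a self-contained inductive argument instead of an external reference. The overall strategy---induct on $|G|$, take a minimal normal $N$, and in the $\pi'$-case factor $\nu_\pi(X)=\nu_\pi(X/N)\cdot|N:\bN_N(H_X)|$ via the complement map $H\mapsto HN/N$---is sound and is essentially the standard approach to this kind of result.

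There is, however, one genuine gap in the $\pi'$-case. You write ``choosing $H_{KN}=H_G\cap KN$ (forced to be a Hall $\pi$-subgroup of $KN$ since $|KN|_\pi=|K|_\pi=|L|$)'', but the parenthetical does not justify the claim: for an \emph{arbitrary} Hall $\pi$-subgroup $H_G$ of $G$, the intersection $H_G\cap KN$ need not have order $|KN|_\pi$, because $KN$ is not normal in $G$ in general. Knowing the target order $|KN|_\pi$ does not tell you that $H_G\cap KN$ attains it. The fix is easy: first pick a Hall $\pi$-subgroup $L'$ of $KN$, and then (using that in a $\pi$-separable group every $\pi$-subgroup lies in some Hall $\pi$-subgroup) choose $H_G$ to contain $L'$. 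Then $L'\le H_G\cap KN$ is a $\pi$-subgroup of $KN$ of order at least $|KN|_\pi$, so equality holds and $H_{KN}:=H_G\cap KN=L'$ is Hall in $KN$. With this adjustment, your normalizer inclusion $\bN_N(H_G)\le\bN_N(H_{KN})$ (valid because any $n\in N\le KN$ normalizes $KN$) gives $c_{KN}\mid c_G$, and the rest of your argument goes through.
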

\begin{proof}
This is Corollary 1.2 of \cite{T2004}.
\end{proof}

The following result covers Theorem \ref{B} in the introduction
(by taking $\pi=\{p\}$).

\begin{thm}\label{B'}
Let $G$ be a solvable group of odd order, and let $H\in\Hall_\pi(G)$.
Suppose that $\NN$ is a normal $\pi$-series of $G$.
Then there is a canonical bijection from $\Irr(G,\NN)$ onto $\Irr({\bf N}_G(H),\NN)$.
Also, if $\xi\in\Irr({\bf N}_G(H),\NN)$ is the image of
$\chi\in\Irr(G,\NN)$ under this map, then
$\xi$ is a constituent of $\chi_{{\bf N}_G(H)}$
and $\xi(1)$ divides $\chi(1)$.

Furthermore, for each $i\ge 0$, the bijection
maps $\Irr_i(G,\NN)$ onto $\Irr_i({\bf N}_G(H),\NN)$,
and thus defines a canonical bijection between these two sets.
\end{thm}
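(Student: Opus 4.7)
The plan is to combine the structural partitions of $\Irr(G,\NN)$ and $\Irr(\bN_G(H),\NN)$ furnished by Theorem~\ref{A'} and Corollary~\ref{D} with Isaacs's canonical odd-order $\pi'$-special correspondence (Lemma~\ref{p'-cor}), constructing for each $\alpha\in\Irr(H,\NN)$ a canonical bijection between the pieces $\Irr_\alpha(G)$ and $\Irr(\bN_G(H)\mid\alpha)$, and then gluing. Concretely, fix $\alpha$ and let $(W,\gamma)$ be the pair attached to $\alpha$ by Lemma~\ref{key}; write $\gamma'=\gamma_{\bN_W(H)}$, noting that $\bN_W(H)=W\cap\bN_G(H)$ contains $H$ and has $\pi'$-index in $W$, so that $\gamma'\in\X(\bN_W(H))$ by Lemma~\ref{res}. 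Theorem~\ref{C'}(4) supplies the bijection $\Y(W)\to\Irr_\alpha(G)$ given by $\delta\mapsto(\gamma\delta)^G$, and the Clifford/Gallagher argument inside the proof of Corollary~\ref{D} supplies the bijection $\Y(\bN_W(H))\to\Irr(\bN_G(H)\mid\alpha)$ given by $\mu\mapsto(\gamma'\mu)^{\bN_G(H)}$. Since $W$ is solvable of odd order with Hall $\pi$-subgroup $H$, Lemma~\ref{p'-cor}(3) yields a canonical bijection $\Y(W)\to\Y(\bN_W(H))$; composing the three defines
\[
\Phi_\alpha\colon\Irr_\alpha(G)\longrightarrow\Irr(\bN_G(H)\mid\alpha).
\]

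To glue the $\Phi_\alpha$'s, I would check $\bN_G(H)$-equivariance: for $g\in\bN_G(H)$ and $\beta=\alpha^g$, the pair for $\beta$ is $(W^g,\gamma^g)$, and equivariance of the Isaacs correspondence under the isomorphism of pairs $(W,H)\to(W^g,H)$ shows $\Phi_\alpha=\Phi_\beta$; note that $\Irr_\alpha(G)=\Irr_\beta(G)$ by Theorem~\ref{C'}(5) and $\Irr(\bN_G(H)\mid\alpha)=\Irr(\bN_G(H)\mid\beta)$ by Clifford's theorem. Choosing a transversal $\Delta$ for the $\bN_G(H)$-action on $\Irr(H,\NN)$ and invoking Theorem~\ref{A'}, Theorem~\ref{C'}(5) and Corollary~\ref{D}, I can write
\[
\Irr(G,\NN)=\bigsqcup_{\alpha\in\Delta}\Irr_\alpha(G),\qquad \Irr(\bN_G(H),\NN)=\bigsqcup_{\alpha\in\Delta}\Irr(\bN_G(H)\mid\alpha),
\]
and the $\Phi_\alpha$'s patch into a single canonical bijection $\Phi\colon\Irr(G,\NN)\to\Irr(\bN_G(H),\NN)$.

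For the grading claim, Theorem~\ref{C'}(4) gives $\chi(1)_\pi=\alpha(1)$ whenever $\chi\in\Irr_\alpha(G)$; the analogous computation on the other side, using $\gamma'(1)=\alpha(1)$ and that $\mu(1)$ is a $\pi'$-number, gives $\Phi(\chi)(1)_\pi=\alpha(1)$, so $\Phi$ carries $\Irr_i(G,\NN)$ onto $\Irr_i(\bN_G(H),\NN)$ for every $i\ge 0$. For the constituent claim, write $\chi=(\gamma\delta)^G$ and $\xi=\Phi(\chi)=(\gamma'\mu)^{\bN_G(H)}$ with $\mu$ the Isaacs image of $\delta$; the identity double-coset summand of the Mackey decomposition of $\chi_{\bN_G(H)}$ equals $(\gamma'\,\delta_{\bN_W(H)})^{\bN_G(H)}$, and since $\mu$ is a constituent of $\delta_{\bN_W(H)}$ by Lemma~\ref{p'-cor}(1), Lemma~\ref{prod} makes $\gamma'\mu$ a constituent of $(\gamma\delta)_{\bN_W(H)}$, whence $\xi$ is a constituent of $\chi_{\bN_G(H)}$ by Frobenius reciprocity. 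For the degree divisibility, $\chi(1)/\xi(1)=(|G:W|/|\bN_G(H):\bN_W(H)|)\cdot(\delta(1)/\mu(1))$; the second factor is an integer by Lemma~\ref{p'-cor}(2), while the first equals $\nu_\pi(G)/\nu_\pi(W)$, an integer by Lemma~\ref{T}.

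The step I expect to be the main obstacle is the equivariance verification in the gluing phase: one must check that Isaacs's odd-order $\pi'$-correspondence attached to $(W^g,H)$ is exactly the transport under $g$ of the one attached to $(W,H)$ for every $g\in\bN_G(H)$, so that $\Phi$ is genuinely well-defined and independent of the transversal $\Delta$. Once this intrinsic naturality is in place, the remaining claims reduce to the Mackey computation and the index identity $\nu_\pi(W)\mid\nu_\pi(G)$ recorded above.
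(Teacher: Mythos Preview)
Your proposal is correct and follows essentially the same route as the paper: both build the bijection piecewise over $\alpha\in\Irr(H,\NN)$ by composing the parametrizations $\Y(W)\to\Irr_\alpha(G)$ and $\Y(\bN_W(H))\to\Irr(\bN_G(H)\mid\alpha)$ with the canonical odd-order $\pi'$-special correspondence $\Y(W)\to\Y(\bN_W(H))$ from Lemma~\ref{p'-cor}, verify $\bN_G(H)$-equivariance to glue, and derive the constituent and divisibility statements from Lemma~\ref{p'-cor}(1)--(2) together with Lemma~\ref{T}. The equivariance you flag as the main obstacle is exactly what the paper checks when it writes $(\delta^x)^*=(\delta^*)^x$, relying on the canonical (hence conjugation-natural) character of Isaacs's correspondence.
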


\begin{proof}
Let $\chi\in\Irr(G,\NN)$.
By Theorem \ref{A'}, we know that $\chi\in\Irr_\alpha(G)$, where $\alpha\in\Irr(H,\NN)$.
Let $W$ be the unique maximal subgroup of $G$ to which $\alpha$ extends and let $\gamma$ be the unique $\pi$-special extension of $\alpha$ to $W$.
Then by Theorem \ref{C'}(4), we have $\chi=(\gamma\delta)^G$
for some $\delta\in \Y(W)$. Let $\gamma'$ be the restriction of $\gamma$ to ${\bf N}_W(H)$,
and note that $|W:{\bf N}_W(H)|$ is a $\pi'$-number,
so $\gamma'$ is also $\pi$-special (see Lemma \ref{res}).
By Lemma \ref{p'-cor}, we have a canonical bijection
from $\Y(W)$ onto $\Y({\bf N}_W(H))$,
and we write $\delta^*$ for the image of $\delta\in\Y(W)$ under this map.
As in the proof of Corollary \ref{D},
the map $\mu\mapsto(\gamma'\mu)^{{\bf N}_G(H)}$ is a bijection
from $\Y(\bN_W(H))$ onto $\Irr({\bf N}_G(H)|\alpha)$.
Let $\tilde\chi=(\gamma'\delta^*)^{{\bf N}_G(H)}$.
Since $\delta^*\in\Y(\bN_W(H))$, it follows that $\tilde\chi\in\Irr({\bf N}_G(H)|\alpha)$.

We claim that $\tilde\chi$ is independent of the choice of $\alpha$.
To see this, assume that $\chi\in\Irr_\beta(G)$, where $\beta\in\Irr(H,\NN)$.
Then $\beta=\alpha^x$ for some element $x\in{\bf N}_G(H)$ by Theorem \ref{C'}(5),
and thus $W^x$ is the unique maximal subgroup of $G$ to which $\beta$ extends and
$\gamma^x$ is the unique $\pi$-special extension of $\beta$ to $W^x$.
Also, $(\gamma')^x$ is the restriction of $\gamma^x$ to ${\bf N}_{W^x}(H)$,
and the canonical bijection from $\Y(W^x)$ onto $\Y({\bf N}_{W^x}(H))$
yields $\delta^x\mapsto(\delta^x)^*=(\delta^*)^x$.
Thus
$$((\gamma')^x(\delta^*)^x)^{{\bf N}_G(H)}=(\gamma'\delta^*)^{{\bf N}_G(H)}
=\tilde\chi,$$
and so $\tilde\chi$ is uniquely determined by $\chi$, as claimed.

Now the map $\chi=(\gamma\delta)^G\mapsto\tilde\chi=(\gamma'\delta^*)^{{\bf N}_G(H)}$ defines a canonical bijection
from $\Irr(G,\NN)$ onto $\Irr({\bf N}_G(H),\NN)$.
By Lemma \ref{p'-cor}, we see that $\delta^*$ lies under $\delta$
and $\delta^*(1)$ divides $\delta(1)$.
It follows that $\gamma'\delta^*$ lies under $\gamma\delta$,
and thus $\tilde\chi$ is a constituent of $\chi_{{\bf N}_G(H)}$.
Note that $\chi(1)=|G:W|\gamma(1)\delta(1)$
and
$$\tilde\chi(1)=|{\bf N}_G(H):{\bf N}_W(H)|\gamma(1)\delta^*(1).$$
Also, by Lemma \ref{T}, we deduce that $|{\bf N}_G(H):{\bf N}_W(H)|$
divides $|G:W|$, and thus $\tilde\chi(1)$ divides $\chi(1)$.

Finally, since $\chi(1)_p=\gamma(1)_p=\tilde\chi(1)_p$,
it follows that our canonical bijection
maps $\Irr_i(G,\NN)$ onto $\Irr_i({\bf N}_G(H),\NN)$.
This completes the proof.
\end{proof}

\section*{Acknowledgements}
The authors wish to thank Professor Gang Chen for his careful reading of the manuscript
and valuable suggestions.
This work was supported by the NSF of China (Nos. 12171289 and 12101374).



\begin{thebibliography}{99}
\bibitem{CS} M. Cabanes, B. Sp\"ath, The McKay conjecture on character degrees, Ann. Math. to appear.

\bibitem{I1973} I.M. Isaacs, Characters of solvable and symplectic groups, Amer. J. Math.
 {\bf 95}  (1973) 594-635.

\bibitem{I1976} I.M. Isaacs, Character Theory of Finite Groups, AMS Chelsea Publishing, 2006.

\bibitem{I2018} I.M. Isaacs, Character of Solvable Group,
                 Amer. Math. Soc, Providence, RI, 2018.

\bibitem{IN2001} I.M. Isaacs, G. Navarro,
    Characters of $p'$-degree of $p$-solvable groups, J. Algebra {\bf 246} (2001) 394-413.

\bibitem{L2006} M.L. Lewis, Obtaining nuclei from chains of normal subgroups,
J. Alg. Appl. {\bf 5} (2006) 215-229.

\bibitem{L2006a} M.L. Lewis, Induction and restriction of lifts of $\pi$-partial characters,
Algebra Colloq. {\bf 13} (2006) 607-616.

\bibitem {L2008} M.L. Lewis, Subnormal inductive sources and $\pi$-partial characters,
Algebra Colloq. {\bf 15} (2008) 405-413.

\bibitem {L2009} M.L. Lewis, Nuclei and lifts of $\pi$-separable groups,
 Algebra Colloq. {\bf 16} (2009) 167-180.

\bibitem{L2010} M.L. Lewis, Lifts of partial characters with respect to a chain of normal subgroups, Alg. Rep. Theory {\bf 13}  (2010) 661-672.

\bibitem{N1993} G. Navarro, Weights, vertices and a correspondence of characters in groups of odd order, Math. Z. {\bf 212} (1993) 535-544.

\bibitem{N2018} G. Navarro, Character Theory and the McKay Conjecture, Cambridge University Press, Cambridge, 2018.

\bibitem{T2004} A. Turull, The number of Hall $\pi$-subgroups of a $\pi$-separable group,
Proc. Amer. Math. Soc.  {\bf 132} (2004) 2563–2565.

\bibitem{W1990} T.R. Wolf, Variations on McKay's character degree conjecture,
J. Algebra {\bf 135} (1990) 123-138.
\end{thebibliography}
\end{document}